\documentclass[11pt,reqno]{amsart}

\usepackage[T1]{fontenc}
\usepackage{lmodern}
\usepackage{microtype}

\usepackage{amsmath,amssymb,amsthm,mathtools}
\usepackage{enumitem}

\usepackage[backend=biber,style=alphabetic,%
maxbibnames=10,giveninits=true,maxalphanames=4]{biblatex}
\usepackage{xcolor}
\usepackage[
  colorlinks=true,
  linkcolor=blue!45!black,
  citecolor=blue!45!black,
  urlcolor=blue!45!black
]{hyperref}

\numberwithin{equation}{section}

\newtheorem{theorem}{Theorem}[section]
\newtheorem{proposition}[theorem]{Proposition}
\newtheorem{lemma}[theorem]{Lemma}
\newtheorem{corollary}[theorem]{Corollary}

\theoremstyle{definition}
\newtheorem{definition}[theorem]{Definition}

\newcommand{\Q}{\mathbb Q}
\newcommand{\Z}{\mathbb Z}
\newcommand{\Zpos}{\mathbb Z_{>0}}
\DeclareMathOperator{\Col}{Col}
\newcommand{\Colun}{\Col_{\mathrm{un}}}

\title[Sum--product growth for fixed-field lattice products]{The sum--product
conjecture holds for fixed-field lattice product constructions}

\author{Harrison Lemley}
\email{hlemley@algebraity.com}

\author{David Chew}
\email{david.c.chew.20@gmail.com}

\date{\today}

\subjclass[2020]{Primary 11B30; Secondary 11J86, 11R27, 11G50, 11P70}
\keywords{sum--product conjecture, sumsets, product sets, number fields,
  unit groups, Weil height, linear forms in logarithms}

\hypersetup{
  pdftitle={The sum-product conjecture holds for fixed-field lattice product
  constructions},
  pdfauthor={Harrison Lemley and David Chew}
}

\begin{document}

\begin{abstract}
We study fixed-field analogues of the lattice product construction
recently used by Bloom, Sawin, Schildkraut, and Zhelezov to disprove the
sum--product conjecture over the real numbers. These sets are of the form $A_n =
P_nG_n \subset F$,
where $F$ is a fixed number field, $P_n$ can be rescaled to a set of
algebraic integers whose conjugates are polynomially bounded in
$|P_n \setminus \{0\}|$, and $G_n$ occupies a positive proportion of a
bounded box of scale $Y_n$ in a fixed positive-rank multiplicative lattice.
We prove that these fixed-field constructions satisfy the sum--product
conjecture:
\[
\max\{|A_n + A_n|,\, |A_nA_n|\} \ge |A_n|^{2 - o(1)}
\]
as $|A_n| \to \infty$, with no restriction on the relative growth of the two
factors. More strongly, any fixed power saving in the product set forces
an asymptotically maximal sumset: there exists $C > 0$ such that, for each
fixed $\delta \in (0, 1]$, if $|A_nA_n| \le |A_n|^{2 - \delta}$ for all
sufficiently large $n$, then $Y_n \to \infty$ and
\[
\frac{|A_n + A_n|}{|A_n|^2}
=
\frac{1}{2} + O_\delta\left(\frac{(\log Y_n)^C}{Y_n}\right)\,.
\]
In particular, the additive-box times unit-box construction of Bloom,
Sawin, Schildkraut, and Zhelezov cannot produce sum--product counterexamples
in any fixed totally real number field, at any relative scales. The proof
combines a bound for factorizations of algebraic integers with a more
general maximal-sumset theorem for coefficient sets whose nonzero-element ratios
have small Weil height.
The latter controls multiplication fibers and combines a
lattice strip estimate with logarithmic separation derived from the
theorems of Yu and Matveev.
\end{abstract}

\maketitle

\section{Introduction and motivation}

\subsection{Background and context}

The sum--product phenomenon, classically associated with Erd\H{o}s and
Szemer\'edi
\cite{ErdosSzemeredi1983}, asks to what extent a finite set can
have both additive and multiplicative structure. If $A$ is a finite subset
of a ring, write
\[
A + A := \{a + a' : a, a' \in A\}\,,
\qquad
AA := \{aa' : a, a' \in A\}\,.
\]
The classical sum--product conjecture predicts that, for every
$\varepsilon > 0$, there exists a constant $c_\varepsilon > 0$ such that every
sufficiently large finite set $A \subset \Z$ satisfies
\[
\max\{|A + A|,\, |AA|\} \ge c_\varepsilon|A|^{2 - \varepsilon}\,.
\]
Thus, if this conjecture holds, a sequence of finite sets of integers should
not simultaneously have additive and multiplicative growth that is
asymptotically subquadratic by a fixed power of the sets' sizes.

A particularly important family in this context is given by sets of the
form
\[
A = PG\,,
\]
where $G$ is a geometric progression and $P$ is an arithmetic
progression or an interval-like set. The geometric factor provides strong
multiplicative structure, while the arithmetic factor supplies many
possible coefficients within each geometric layer. Constructions of this
type appear in the work of Balog and Wooley
\cite{BalogWooley2017} and in subsequent work on sum--product phenomena and
additive-energy estimates
\cite{HansonRudnevShkredovZhelezov2025}.

Recent work of Bloom, Sawin, Schildkraut, and Zhelezov (BSSZ)
\cite{BloomSawinSchildkrautZhelezov2026} gives further motivation for
studying this family. They construct a high-dimensional analogue of the
Balog--Wooley product construction and use it to disprove the sum--product
conjecture over the real numbers. Their construction is carried out in
totally real number fields whose degrees tend to infinity logarithmically with
the size of
the set. Its additive component is obtained from the lattice of algebraic
integers under the real embeddings, while its multiplicative component is
a box in the logarithmic lattice of units. This produces arbitrarily
large finite sets $A \subset \mathbb R$ for which both $|A + A|$ and $|AA|$
are bounded by $|A|^{2 - c}$ for an absolute constant $c > 0$.

BSSZ remark that, because the degree of their fields grows with $|A|$, the
sum--product conjecture may still hold in number fields of bounded degree. In
this direction, we show that their construction, and indeed every lattice
product of the type considered here, satisfies the conjecture in any fixed
number field. This definitively answers the question of whether the construction
of BSSZ could disprove the sum--product conjecture in any fixed number field; we
note that this is not the same as taking sets from potentially distinct number
fields of bounded degree, which is not addressed in the present paper.

We allow
the coefficient set to be substantially more general than an additive
lattice box, and the multiplicative factor to be any positive-density
subset of a bounded exponent box in a fixed finitely generated group.
The two factors may grow at unrelated rates. We show that every family
satisfying the precise hypotheses below has sum--product growth
$|A_n|^{2 - o(1)}$. More strongly, whenever the product set has a fixed power
saving, the sumset is asymptotically maximal.

\subsection{Definition of the sets and main result}

Let $F \subset \mathbb C$ be a fixed number field, let $\mathcal O_F$ be its
ring of integers, and put $d := [F : \Q]$. We first define the two families
used in the main theorem.

\begin{definition}[Coefficient families]\label{def:coefficients}
Let $P_n \subset F$ be finite sets, and put
\[
P_n^* := P_n \setminus \{0\}\,,
\qquad
M_n := |P_n^*|\,,
\qquad
\widehat M_n := \max\{2,\, M_n\}\,.
\]
We call $(P_n)$ an \emph{admissible coefficient family} if $M_n \ge 1$ and
there are fixed constants $C_P \ge 1$ and $\kappa > 0$ such that, for every
$n$, there exists $w_n \in F^\times$ with
\begin{equation}\label{eq:coefficient-hypothesis}
Q_n := w_nP_n^* \subset \mathcal O_F \setminus \{0\}\,,
\qquad
\max_{\substack{q \in Q_n\\ \sigma : F \hookrightarrow \mathbb C}}
|\sigma(q)| \le C_P\widehat M_n^\kappa\,.
\end{equation}
\end{definition}

The condition on $P_n$ is essentially
that, after a common rescaling, all conjugates of its nonzero elements are
bounded by a fixed power of its cardinality. In particular, it permits
irregular subsets of additive lattice boxes, provided those subsets have
cardinality at least a fixed positive power of the box scale.

Let $\mu_F$ denote the finite group of roots of unity in $F$. Fix a finitely
generated subgroup $\Gamma \le F^\times$, and write
\[
\Gamma_{\mathrm{tors}} := \Gamma \cap \mu_F\,,
\qquad
\overline{\Gamma} := \Gamma / \Gamma_{\mathrm{tors}}\,,
\qquad
\rho := \operatorname{rank}_{\Z}\overline{\Gamma}\,.
\]
Assume that $\rho \ge 1$ and fix an isomorphism
\[
\psi : \overline{\Gamma} \longrightarrow \Z^\rho\,.
\]
Let $\pi : \Gamma \to \overline{\Gamma}$ be the quotient map.

\begin{definition}[Multiplicative lattice families]\label{def:lattice}
Let $Y_n \ge 2$ be real numbers. We call a family of finite sets
$G_n \subset \Gamma$ \emph{full-dimensional of scale $Y_n$} if there are
fixed constants $c_G > 0$ and $C_G \ge 1$ such that
\begin{equation}\label{eq:lattice-hypothesis}
\psi(\pi(G_n)) \subset [-C_GY_n, C_GY_n]^\rho\,,
\qquad
|G_n| \ge c_GY_n^\rho
\end{equation}
for every $n$.
\end{definition}

Each exponent vector has at most $|\Gamma_{\mathrm{tors}}|$ preimages, which is
a finite number, so
the first condition also gives $|G_n| \ll Y_n^\rho$. Consequently,
\[
|G_n| \asymp Y_n^\rho\,.
\]
Changing $\psi$ only changes the constants in
\eqref{eq:lattice-hypothesis}.

Throughout this paper, $F$, $\Gamma$, $\psi$, $C_P$, $\kappa$, $c_G$, and
$C_G$ refer to fixed data whenever they occur. All logarithms without
a subscript are natural. We use the standard asymptotic notation $X = O(Y)$,
or equivalently $X \ll Y$, to mean that $|X| \le CY$ for some constant
$C > 0$. We write $X \gg Y$ if $Y \ll X$, and $X \asymp Y$ if both $X \ll Y$
and $X \gg Y$. Unless otherwise specified, implicit constants may depend
on the fixed data, but not on the varying sets or their scales. A subscript
on $O$ or $\ll$ records additional permitted dependence.

Our main result is the following.

\begin{theorem}[Fixed-field obstruction]\label{thm:main}
Let $(P_n)$ be an admissible coefficient family and let $(G_n)$ be a
full-dimensional multiplicative lattice family of scales $Y_n$. Put
\[
A_n := P_nG_n \subset F\,,
\]
and suppose that $|A_n| \to \infty$ as $n \to \infty$. Then
\begin{equation}\label{eq:main-growth}
\max\{|A_n + A_n|,\, |A_nA_n|\} \ge |A_n|^{2 - o(1)}\,.
\end{equation}
More strongly, let $\delta \in (0, 1]$ be fixed. If
\begin{equation}\label{eq:product-saving}
|A_nA_n| \le |A_n|^{2 - \delta}
\end{equation}
for all sufficiently large $n$, then $Y_n \to \infty$ and there exists a
constant $C > 0$ independent of $\delta$ such that
\begin{equation}\label{eq:main-maximal}
\frac{|A_n + A_n|}{|A_n|^2}
=
\frac{1}{2} + O_\delta\left(\frac{(\log Y_n)^C}{Y_n}\right)\,.
\end{equation}
\end{theorem}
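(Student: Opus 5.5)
We argue in two stages, following the abstract: first reduce the growth statement \eqref{eq:main-growth} to the maximal-sumset statement \eqref{eq:main-maximal}, then prove the latter by counting coincidences $a_1+a_2=a_3+a_4$ in $A_n=P_nG_n$.

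\emph{Reduction of \eqref{eq:main-growth}.} It suffices to show that for each fixed $\delta$, the hypothesis \eqref{eq:product-saving} forces $|A_n+A_n|\ge |A_n|^{2-\delta}$ for large $n$; a diagonal argument in $\delta$ then gives the $o(1)$ exponent. Under \eqref{eq:product-saving}, \eqref{eq:main-maximal} gives $|A_n+A_n|\ge(\tfrac12-o(1))|A_n|^2$, which is far stronger. So everything rests on the second statement.

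\emph{Why \eqref{eq:product-saving} forces $Y_n\to\infty$ and a large box.} Since $|A_n|\le M_n|G_n|+1\ll M_nY_n^\rho$, we have $A_nA_n\supseteq Q$-products. I would lower-bound $|A_nA_n|$ using the factorization bound for algebraic integers. Products $w_n^{-2}qq'gg'$ with $q,q'\in Q_n$ have conjugates $\ll \widehat M_n^{2\kappa}$. By divisor-type bounds in $\mathcal O_F$ (number of ideal factorizations $\ll_\varepsilon N^\varepsilon$) together with the finiteness of units of bounded height in a fixed field, each element of $P_n^*P_n^*$ has $\widehat M_n^{o(1)}$ representations modulo $\Gamma$-units in a bounded range. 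Hence $|P_nP_n|\ge M_n^{2-o(1)}$. Combined with $|G_nG_n|\asymp Y_n^\rho$, and with the fact that a ratio $g/g'\in\Gamma$ identifying two products $qq'g=\tilde q\tilde q'g'$ must have height $\ll\log \widehat M_n$, so it lies in a set of size $(\log \widehat M_n)^{O(1)}$, we get
\[
|A_nA_n|\gg M_n^{2-o(1)}Y_n^\rho(\log\widehat M_n)^{-O(1)}.
\]
Comparing with $|A_n|^{2-\delta}\asymp (M_nY_n^\rho)^{2-\delta}$ gives $Y_n^{\rho(1-\delta)}\ll M_n^{\delta+o(1)}$ up to logs. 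This is consistent with $Y_n$ bounded only if $|A_n|$ is bounded, so $Y_n\to\infty$. It also shows that $\log Y_n$ and $\log M_n$ are polynomially related, which is what the error term needs.

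\emph{Maximal sumset.} Write $|A_n+A_n|\ge |A_n|^2/2-E'$, where $E'$ counts nontrivial solutions of $p_1g_1+p_2g_2=p_3g_3+p_4g_4$, i.e.\ those with $\{p_1g_1,p_2g_2\}\ne\{p_3g_3,p_4g_4\}$. The upper bound $|A_n+A_n|\le \tfrac12|A_n|^2+|A_n|$ is trivial. Dividing by $g_4$, we need to count solutions of
\[
x_1+x_2=x_3+1
\]
with $x_i$ in ratio sets $(P_n^*/P_n^*)\cdot(G_n/G_n)$. The coefficient ratios have Weil height $\ll\log\widehat M_n$. The subspace theorem is too ineffective for this, so instead I would fix the coefficient tuple (there are $M_n^4$ of them) and count unit tuples $(u_1,u_2,u_3)$ in the box of size $Y_n$.

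The key step, and the main obstacle, is showing that for fixed coefficients the number of nondegenerate solutions is $\ll Y_n^{2\rho-1}(\log Y_n)^C$, i.e.\ it saves a full power $Y_n$ over the trivial $Y_n^{2\rho}$. The plan is as follows. Fix $u_1,u_2$ up to a strip; the equation determines $u_3$. Baker-type lower bounds (Matveev for archimedean places, Yu for $p$-adic ones) show that the dominant terms must nearly cancel. A near-cancellation forces a linear form in logarithms to be exponentially small. Matveev then confines the exponent vectors to a strip of width $O(\log Y_n+\log\widehat M_n)$ around a hyperplane in $\Z^{2\rho}$, and a lattice point count in this strip gives the saving.

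Summing over coefficient tuples gives
\[
E'\ll M_n^4Y_n^{3\rho-1}(\log Y_n)^C.
\]
Also, multiplication fibers (repeated representations $pg=p'g'$) cost at most $(\log)^{O(1)}$ factors by the height bound above. Dividing by $|A_n|^2\asymp M_n^2Y_n^{2\rho}$, and using a Cauchy--Schwarz/energy normalization (with $|A_n+A_n|\ge |A_n|^4/E$), yields relative error $(\log Y_n)^C/Y_n$. Here the logs in $M_n$ are converted to logs in $Y_n$ via the polynomial relation above, with constants depending on $\delta$.
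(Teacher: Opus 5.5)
Your reduction of \eqref{eq:main-growth} to \eqref{eq:main-maximal}, and the argument that $Y_n\to\infty$, are fine in outline. The maximal-sumset step, however, has a genuine gap.

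Both routes you propose need the number $E'$ of nontrivial additive quadruples to be $o(|A_n|^2)$. These are $|A_n+A_n|\ge|A_n|^2/2-E'$ and $|A_n+A_n|\ge|A_n|^4/E(A_n)$. Your own estimate $E'\ll M_n^4Y_n^{3\rho-1}(\log Y_n)^C$ only gives $E'/|A_n|^2\ll M_n^2Y_n^{\rho-1}(\log Y_n)^{O(1)}$. This does not tend to $0$, so the last step does not follow.

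Worse, no estimate of this kind can hold in the range the theorem covers. Each nontrivial solution of $p_1+p_2=p_3+p_4$ in $P_n$ yields, for every $g\in G_n$, a nontrivial solution $(gp_1,gp_2,gp_3,gp_4)$. Hence $E'\ge|G_n|\cdot E'(P_n)$. Take the fixed-field BSSZ boxes with $X=Y$. There one has $M\asymp Y^{\rho+1}$, $E'(P)\asymp M^3$, bounded multiplication fibers, $|A|\asymp X^dY^{d-1}$ and $|AA|\ll X^{2d}Y^{d-1}$. So \eqref{eq:product-saving} holds for every fixed $\delta<(d-1)/(2d-1)$. Yet $E'/|A|^2\gg M/Y^{\rho}\asymp Y\to\infty$, and $|A|^4/E(A)\ll|A|^2/Y$.

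The underlying issue is that energy is quadratic in the representation function. A sum $t=g(p_1+p_2)$ inside one layer has $u_A(t)\asymp M$ representations. It contributes $\asymp M^2$ to $E'$, but only $u_A(t)-1$ to the loss in $|A+A|$.

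What is missing is a quantity linear in the representation function: $|A+A|=\tfrac12(|A|^2+|A|)-\Colun(A)$, where $\Colun(A)=\sum_t\max\{u_A(t)-1,0\}$. Call an unordered pair $\{x,y\}\subset A^*$ good if $|\lambda_v(x)-\lambda_v(y)|>H_Y$, with $H_Y\asymp(\log Y)^{\rho+4}$. Suppose two good pairs $\{x_1,x_2\},\{x_3,x_4\}$ have the same sum, and $x_1$ has the largest $v$-adic absolute value of the four. Then $|x_3/x_1-1|_v<2e^{-H_Y}$. Yu/Matveev applied to the single ratio $x_3/x_1=(m/m')(\xi/\xi')\prod_i\gamma_i^{b_i}$ then forces $x_3=x_1$.

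So every sum has at most one good representation, and $\Colun(A^*)$ is at most the number of bad pairs. That is a two-variable count, $\ll M^2|G|Y^{\rho-1}(\log Y)^{\rho+4}$ by the strip bound. Dividing by $|A|^2\gg M^2Y^{2\rho}(\log Y)^{-2\rho}$ gives the error $(\log Y)^{3\rho+4}/Y$. Your dichotomy, that one of the two pairs must have close $v$-adic logarithms, is the right ingredient; it has to be used to bound pairs, not quadruples.

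Separately, your scale comparison is written backwards. It actually yields $M_n^{\delta-\varepsilon}\ll Y_n^{\rho(1-\delta)}(\log\widehat M_n)^{\rho}$, hence $\log\widehat M_n\ll_\delta\log Y_n$. That is exactly the direction needed for $R(P_n)\ll_\delta\log Y_n$. The converse is false in general: take $P_n=\{1\}$.
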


Here \eqref{eq:main-growth} means that, for every $\varepsilon > 0$, the
maximum is at least $|A_n|^{2 - \varepsilon}$ for all sufficiently large
$n$. In particular, no family in this class has a fixed power saving in
both its sumset and its product set. Multiplying either factor by an
arbitrary nonzero element of $F$ does not affect these conclusions, as such a
scalar may be absorbed into the coefficient set, where it is removed by
the normalization in \eqref{eq:coefficient-hypothesis}.

To state the application to the construction motivating the paper, let
$F$ be totally real, with embeddings $\sigma_1, \ldots, \sigma_d$. Write
\[
B_F^+(X)
:=
\{\alpha \in \mathcal O_F : |\sigma_i(\alpha)| \le X
\,\forall i\}
\]
and
\[
B_F^\times(Y)
:=
\{u \in \mathcal O_F^\times : \bigl|\log|\sigma_i(u)|\bigr| \le Y
\,\forall i\}\,.
\]
These are the additive and multiplicative boxes used in
\cite[\S\S3--4]{BloomSawinSchildkrautZhelezov2026}; their product
construction takes $P = X + B_F^+(\varepsilon X)$ and $G = B_F^\times(Y)$, with
$X$ integral and $\varepsilon$ a sufficiently small fixed positive constant.

\begin{corollary}[Fixed-field BSSZ boxes]\label{cor:BSSZ}
Let $F$ be a fixed totally real number field of degree $d \ge 2$, and let
$\varepsilon \in (0, 1)$ be fixed. For $X \in \Zpos$ and $Y \ge 2$ real, define
\[
A_{X, Y}
:=
\bigl(X + B_F^+(\varepsilon X)\bigr)B_F^\times(Y)\,.
\]
Along every sequence $(X_n, Y_n)$ for which $|A_{X_n, Y_n}| \to \infty$,
\[
\max\{|A_{X_n, Y_n} + A_{X_n, Y_n}|,\,
|A_{X_n, Y_n}A_{X_n, Y_n}|\}
\ge |A_{X_n, Y_n}|^{2 - o(1)}\,.
\]
If the product sets have a fixed power saving, then their sumsets satisfy
\eqref{eq:main-maximal}. The parameters $X_n$ and $Y_n$ may grow at
arbitrary relative rates.
\end{corollary}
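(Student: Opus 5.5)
The plan is to deduce Corollary~\ref{cor:BSSZ} directly from Theorem~\ref{thm:main}. Take $P_n := X_n + B_F^+(\varepsilon X_n)$, $G_n := B_F^\times(Y_n)$ and $\Gamma := \mathcal O_F^\times$, and check that these form an admissible coefficient family and a full-dimensional multiplicative lattice family. Since $F$ is totally real of degree $d \ge 2$, Dirichlet's unit theorem gives $\mu_F = \{\pm 1\}$ and $\rho = d - 1 \ge 1$, as Theorem~\ref{thm:main} requires.

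For the coefficients, take $w_n = 1$. Every $\alpha \in P_n$ lies in $\mathcal O_F$ and satisfies $|\sigma_i(\alpha)| \le (1+\varepsilon)X_n$. Moreover $\sigma_i(\alpha) \ge (1-\varepsilon)X_n > 0$ for all $i$, so $0 \notin P_n$ and $M_n = |P_n| \ge 1$. It remains to bound $X_n$ by a power of $\widehat M_n$. The embedding $\mathcal O_F \hookrightarrow \mathbb R^d$ is a full lattice, so $|B_F^+(T)| \asymp T^d$ for $T \gg 1$. For $T \ge 1$ in general, the lower bound $|B_F^+(T)| \gg T^d$ still holds because the box contains $\{\sum c_j\omega_j : |c_j| \le cT\}$ for an integral basis $(\omega_j)$. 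Translation by the integer $X_n$ preserves $\mathcal O_F$, so $M_n \gg_\varepsilon X_n^d$ once $\varepsilon X_n \ge 1$. If instead $\varepsilon X_n < 1$, then $X_n < 1/\varepsilon$ is bounded. Either way $X_n \le C\widehat M_n^{1/d}$, and \eqref{eq:coefficient-hypothesis} holds with $\kappa = 1/d$ and a constant $C_P$ depending on $\varepsilon$ and $F$; these count as fixed data.

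For the units, fix a basis $\eta_1,\dots,\eta_{d-1}$ of $\mathcal O_F^\times/\{\pm1\}$ and let $\psi$ send $\pm\prod\eta_j^{m_j}$ to $(m_j)$. The log embedding $L(u) = (\log|\sigma_i(u)|)_i$ maps the unit lattice onto a full lattice in the trace-zero hyperplane $H$, and $\psi\circ\pi$ is $L$ followed by a fixed invertible linear map $H \to \Z^{d-1}$. Hence $\|L(u)\|_\infty \le Y$ gives $\psi(\pi(u)) \in [-C_GY, C_GY]^{\rho}$. Conversely, $G_n$ contains $\pm$ every exponent vector in a box $[-cY, cY]^\rho$ with $c > 0$ fixed. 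Since $Y \ge 2$, that box contains the origin and at least $\gg Y^\rho$ lattice points, which gives $|G_n| \ge c_G Y_n^\rho$. Thus \eqref{eq:lattice-hypothesis} holds.

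With both hypotheses verified, Theorem~\ref{thm:main} applies along any sequence with $|A_{X_n,Y_n}| \to \infty$. It yields the growth bound, and under a fixed product-set saving it yields \eqref{eq:main-maximal}. No relation between $X_n$ and $Y_n$ is needed, because the theorem imposes none. The only mildly delicate point is the uniform lattice-point lower bounds when $X_n$ or $Y_n$ is small, which the bounded-case remark above handles. No step here is a real obstacle; the substance lies in Theorem~\ref{thm:main}.
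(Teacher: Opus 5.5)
Your proposal is correct and follows the paper's route. The paper's proof invokes Lemma~\ref{lem:box-admissibility}, which checks the coefficient hypothesis with $w_X = 1$, $\kappa = 1/d$ and the lattice hypothesis for $\Gamma = \mathcal O_F^\times$, $\rho = d-1$, using the same comparisons of the Minkowski lattice and the Dirichlet log-lattice with fixed cubes that you use, and then applies Theorem~\ref{thm:main}; your explicit handling of small $X$ and small $Y$ is a slightly more careful version of the paper's cube-counting for $X \ge 1$.
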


\subsection{Outline of the paper}

Section~2 contains the common framework used throughout the proof. We
define the Weil height and record the height identities needed to prove the main
results.
We then choose exponent generators for $\overline{\Gamma}$ and fix a place at
which the group has nontrivial absolute value. Height counting in a fixed
multiplicative group controls the fibers of the multiplication map
\[
P^* \times G \longrightarrow P^*G\,,
\]
where $P^* = P \setminus \{0\}$.

Section~3 proves the quantitative maximal-sumset estimate used in the main
argument. We introduce the unordered
collision excess and express $|A + A|$ in terms of the number of unordered
pairs and repeated representations. The proof combines a strip-counting
estimate in the exponent lattice with a logarithmic separation argument
derived from Yu's theorem \cite{Yu94} at finite places and Matveev's
theorem \cite{Matveev2000} at archimedean places. The strip estimate
controls pairs whose logarithmic absolute values are close. The separation
estimate shows that two unordered representations of the same sum must
coincide if, in each representation, the logarithmic absolute values of
the two summands are sufficiently far apart.

Section~4 establishes near-quadratic product growth for the coefficient
sets in Definition~\ref{def:coefficients}. The proof counts ideal
divisors and then controls the possible generators of each principal
ideal by height. This shows that a fixed power saving in $|A_nA_n|$
forces the hypotheses of Theorem~\ref{thm:maximal}. Applying that theorem
proves Theorem~\ref{thm:main}. 

Section~5 treats
the fixed-field variant of the BSSZ construction and presents a family of sets
built from arithmetic and geometric
progressions as an additional application of the results. Section~6 gives
statements and disclosures about the
research.

\section{Heights and multiplicative lattices}

\subsection{Places and the Weil height}\label{subsec:height}

We begin by specifying the normalization of the absolute values and
height. Let $M_F$ denote the set of places of $F$. For a nonzero integral
ideal $\mathfrak a \subseteq \mathcal O_F$, define its norm by
\[
N\mathfrak a := |\mathcal O_F / \mathfrak a|\,.
\]
A finite place corresponds to a nonzero prime ideal $\mathfrak q$ of
$\mathcal O_F$.
For $x \in F^\times$, let $v_{\mathfrak q}(x)$ denote the exponent of
$\mathfrak q$ in the principal fractional ideal $(x)$. If $\mathfrak q$
lies above the rational prime $p$, let $e_{\mathfrak q}$ and
$f_{\mathfrak q}$ be its ramification index and residue degree, so that
\[
v_{\mathfrak q}(p) = e_{\mathfrak q}\,,
\qquad
N\mathfrak q = p^{f_{\mathfrak q}}\,.
\]
At this place we use the absolute value
\[
|x|_v := p^{-v_{\mathfrak q}(x) / e_{\mathfrak q}}
\]
and the local weight $n_v := e_{\mathfrak q}f_{\mathfrak q}$. At a real
place represented by an embedding $\sigma : F \hookrightarrow \mathbb R$,
we put $|x|_v := |\sigma(x)|$ and $n_v := 1$. At a complex place, represented
by a conjugate pair of nonreal embeddings, we use the ordinary complex
modulus and put $n_v := 2$. All these absolute values
satisfy the triangle inequality, and at finite places they satisfy the
stronger inequality
\[
|x + y|_v \le \max\{|x|_v,\, |y|_v\}\,.
\]
A nonzero element of $F$ is an algebraic integer precisely when
$v_{\mathfrak q}(x) \ge 0$ for every $\mathfrak q$, and it is an algebraic
unit precisely when all these valuations are zero. Thus integrality is
equivalent to $|x|_v \le 1$ at every finite place.

With this normalization the product formula is
\begin{equation}\label{eq:product-formula}
\sum_{v \in M_F}n_v\log|x|_v = 0
\qquad(x \in F^\times)\,.
\end{equation}
Indeed, the archimedean contribution is
$\log|N_{F / \Q}(x)|$, whereas the finite contribution is
\[
-\sum_{\mathfrak q}v_{\mathfrak q}(x)\log N\mathfrak q
= -\log N((x))
= -\log|N_{F / \Q}(x)|\,.
\]
Here the norm of a nonzero fractional ideal is obtained by multiplicatively
extending the norm of integral ideals. For a nonzero algebraic integer
$x$, the equality $N((x)) = |N_{F / \Q}(x)|$ follows by expressing
multiplication by $x$ in an integral basis: the absolute determinant is
both the index $[\mathcal O_F : x\mathcal O_F]$ and the absolute value of
the field norm. Clearing a rational integer denominator proves the same
identity for $x \in F^\times$. Only finitely many terms in these sums are
nonzero.

\begin{definition}[Absolute logarithmic Weil height]\label{def:height}
For $x \in F$, define
\begin{equation}\label{eq:height-definition}
h(x) := \frac{1}{d}\sum_{v \in M_F}n_v\log\max\{1,\, |x|_v\}\,.
\end{equation}
In particular, $h(0) = 0$.
\end{definition}

The height records the size of an algebraic number at all places,
including the contribution of denominators. For example, if $a, b \in \Z$
are coprime and $b > 0$, then
\[
h(a / b) = \log\max\{|a|,\, b\}\,.
\]
The normalization by $d$ makes the height independent of the number field
used to compute it. More precisely, if $E / F$ is finite and $x \in F$, the
places of $E$ above a place $v$ have weights summing to $[E : F]n_v$, and
their absolute values agree with $| \cdot |_v$ on $F$. Dividing by
$[E : \Q] = [E : F]d$ therefore gives the same value. An embedding of $F$ into
$\mathbb C$ transports its places and preserves their weights, so the
height is also unchanged by conjugation. This is the standard absolute
logarithmic height; see \cite[Chapter~1]{BombieriGubler2006}.

We record the properties needed in the proof. They are standard results and will
not be proved here.

\begin{lemma}[Height identities]\label{lem:height-identities}
For $x, y \in F^\times$, $b \in \Z$, and $\xi \in \mu_F$, we have
\begin{align*}
h(x^{-1})&= h(x)\,,
& h(x^b)&= |b|h(x)\,, \\
h(xy)&\le h(x) + h(y)\,,
& h(x / y)&\le h(x) + h(y)\,, \\
h(x + y)&\le h(x) + h(y) + \log 2\,,
& h(\xi x)&= h(x)\,.
\end{align*}
For every place $v$,
\begin{equation}\label{eq:local-height-bound}
\bigl|\log|x|_v\bigr|
\le \frac{d}{n_v}h(x)
\le d\,h(x)\,.
\end{equation}
Moreover,
\begin{equation}\label{eq:height-absolute-sum}
2d\,h(x) = \sum_{v \in M_F}n_v\bigl|\log|x|_v\bigr|\,.
\end{equation}
If $x$ is an algebraic integer, then
\begin{equation}\label{eq:integral-height}
h(x) = \frac{1}{d}\sum_{\sigma : F \hookrightarrow \mathbb C}
\log\max\{1,\, |\sigma(x)|\}\,,
\end{equation}
where the sum is over all $d$ embeddings. In particular, if all conjugates
of $x$ have modulus at most $T \ge 1$, then $h(x) \le \log T$.
Finally, $h(x) = 0$ for $x \ne 0$ if and only if $x$ is a root of unity.
\end{lemma}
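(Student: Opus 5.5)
The identities follow directly from Definition~\ref{def:height} together with the product formula \eqref{eq:product-formula} and the local inequalities at each place. Throughout I write $\log^+ t := \log\max\{1,t\}$ and use $\log t = \log^+ t - \log^+(1/t)$ for $t>0$.

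\emph{Inversion and powers.} Summing the identity $\log|x|_v = \log^+|x|_v - \log^+|x^{-1}|_v$ with weights $n_v$ and applying the product formula gives $h(x) = h(x^{-1})$. For $b \ge 0$ we have $\log^+|x^b|_v = b\log^+|x|_v$ place by place, so $h(x^b) = b\,h(x)$. Combining this with inversion handles $b<0$.

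\emph{Submultiplicativity, quotients, roots of unity.} Submultiplicativity follows from the local bound $\log^+(st) \le \log^+ s + \log^+ t$. The quotient bound is then $h(x/y) \le h(x) + h(y^{-1}) = h(x) + h(y)$. For $\xi \in \mu_F$ we have $|\xi|_v = 1$ at every place, so $\log^+|\xi x|_v = \log^+|x|_v$ termwise.

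\emph{Sums.} At a finite place, the ultrametric inequality gives
\[
\log^+|x+y|_v \le \max\{\log^+|x|_v,\,\log^+|y|_v\} \le \log^+|x|_v + \log^+|y|_v .
\]
At an archimedean place, $|x+y|_v \le 2\max\{|x|_v,|y|_v\}$, so
\[
\log^+|x+y|_v \le \log 2 + \log^+|x|_v + \log^+|y|_v .
\]
The archimedean weights sum to $d$, so after dividing by $d$ the extra terms total $\log 2$.

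\emph{Formula \eqref{eq:height-absolute-sum}.} Write $|\log t| = \log^+ t + \log^+(1/t)$ and sum with weights $n_v$. This gives $d\,h(x) + d\,h(x^{-1}) = 2d\,h(x)$.

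\emph{Bound \eqref{eq:local-height-bound}.} Every term in \eqref{eq:height-absolute-sum} is nonnegative, so a single place satisfies $n_v|\log|x|_v| \le 2d\,h(x)$. This alone loses a factor of $2$, so I sharpen it. By the product formula, $\sum_v n_v\log|x|_v = 0$, hence the positive parts and the negative parts each sum to exactly $d\,h(x)$. Now $|\log|x|_v|$ is either a positive part or a negative part at the place $v$, so $n_v|\log|x|_v| \le d\,h(x)$. Since $n_v \ge 1$, the second inequality in \eqref{eq:local-height-bound} follows.

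\emph{Algebraic integers, formula \eqref{eq:integral-height}.} If $x$ is an algebraic integer, the finite places contribute $\log^+|x|_v = 0$. The archimedean sum with weights $n_v$ is the same as the sum over all $d$ embeddings, because each complex place is counted twice. If every conjugate has modulus at most $T \ge 1$, each term is at most $\log T$, and averaging over the $d$ embeddings gives $h(x) \le \log T$.

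\emph{Vanishing of the height (Kronecker's theorem).} This is the only nonformal step. If $h(x)=0$, then $|x|_v \le 1$ at every place, so $x$ is an integer all of whose conjugates lie in the closed unit disc. The same holds for every power $x^k$, since by the power identity $h(x^k) = k\,h(x) = 0$. The characteristic polynomials of the $x^k$ over $\Q$ have degree $d$ and integer coefficients bounded by $2^d$, so there are only finitely many of them. Hence the powers $x^k$ take only finitely many values, so $x^i = x^j$ for some $i \ne j$. Since $x \ne 0$, it follows that $x$ is a root of unity. Conversely, a root of unity has $|x|_v = 1$ everywhere, so its height is $0$.

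I expect the Kronecker step to be the only one needing real input. There the key points are the coefficient bound and the finiteness of integer polynomials of bounded degree and height.
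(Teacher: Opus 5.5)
Your proof is correct and complete. The paper gives no proof of this lemma: it states that the properties are standard and refers to \cite[Chapter~1]{BombieriGubler2006}. There is therefore no competing argument to compare with. What you have written is the standard self-contained derivation from Definition~\ref{def:height}, the product formula \eqref{eq:product-formula}, and Kronecker's finiteness argument for the vanishing criterion.

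Your version adds a check that the stated constants hold under the normalization of Section~\ref{subsec:height}. There the complex places carry the ordinary modulus with $n_v = 2$, so the triangle inequality is available and the archimedean weights sum to $d$; this gives exactly $\log 2$ in the sum bound. You also correctly saw that the sharp factor $d/n_v$ in \eqref{eq:local-height-bound} does not follow from \eqref{eq:height-absolute-sum} alone, which gives only $2d/n_v$. It needs the product formula to split $2d\,h(x)$ into equal positive and negative parts.
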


For $z \in \mathbb C^\times$, let $\operatorname{Arg}z$ be the unique
$\theta \in (-\pi, \pi]$ such that $z = |z|e^{i\theta}$, and define the
principal value of the complex logarithm by
\[
\operatorname{Log}z := \log|z| + i\operatorname{Arg}z\,.
\]
For every $x \in F^\times$ and every embedding
$\sigma : F \hookrightarrow \mathbb C$, we have
\begin{equation}\label{eq:complex-log-height}
\bigl|\operatorname{Log}\sigma(x)\bigr|
\le \bigl|\log|\sigma(x)|\bigr| + \pi
\le d\,h(x) + \pi\,.
\end{equation}
This is the height bound for logarithms used in Section~3.

For a finite $P \subset F$ with $P^* := P \setminus \{0\} \ne \varnothing$, we
define the ratio height to be
\[
R(P) := \max_{m, m' \in P^*}h(m / m')\,.
\]
This quantity is unchanged by multiplying $P$ by any nonzero scalar.

\begin{lemma}[Ratio heights of admissible
coefficients]\label{lem:coefficient-height}
For the family in Definition~\ref{def:coefficients},
\[
R(P_n) \le 2\log C_P + 2\kappa\log\widehat M_n
\ll \log\widehat M_n\,.
\]
\end{lemma}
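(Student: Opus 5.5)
Since $R(P)$ is invariant under multiplying $P$ by a nonzero scalar, we may replace $P_n^*$ by $Q_n = w_nP_n^* \subset \mathcal O_F \setminus \{0\}$ from \eqref{eq:coefficient-hypothesis}. Indeed, for $m, m' \in P_n^*$ we have $m/m' = (w_nm)/(w_nm')$, so
\[
R(P_n) = \max_{q, q' \in Q_n} h(q/q')\,.
\]
It therefore suffices to bound $h(q/q')$ for nonzero algebraic integers $q, q' \in Q_n$.

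For such $q, q'$, the height identities of Lemma~\ref{lem:height-identities} give $h(q/q') \le h(q) + h(q')$. Each of $q, q'$ is an algebraic integer all of whose conjugates satisfy $|\sigma(q)| \le C_P\widehat M_n^\kappa$. Put $T := C_P\widehat M_n^\kappa$. Since $C_P \ge 1$ and $\widehat M_n \ge 2$, we have $T \ge 1$, so the integral-height consequence of \eqref{eq:integral-height} gives
\[
h(q) \le \log T = \log C_P + \kappa\log\widehat M_n\,,
\]
and likewise for $q'$. Adding these two bounds yields $R(P_n) \le 2\log C_P + 2\kappa\log\widehat M_n$.

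For the final comparison $\ll \log\widehat M_n$, note that $\widehat M_n \ge 2$ gives $\log\widehat M_n \ge \log 2 > 0$. Hence the constant $2\log C_P$ is at most $(2\log C_P/\log 2)\log\widehat M_n$, and the implied constant depends only on the fixed data $C_P$ and $\kappa$. The only point needing care is exactly this: the auxiliary $\widehat M_n = \max\{2, M_n\}$, rather than $M_n$, is what guarantees both $T \ge 1$ and a positive logarithm. Otherwise the argument is routine.
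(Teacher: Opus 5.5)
Your proposal is correct and takes the paper's approach: rescale by $w_n$ so that $m/m' = q/q'$, bound $h(q/q') \le h(q) + h(q')$, and apply the integral-height bound $h(q) \le \log C_P + \kappa\log\widehat M_n$ to each factor. Your justification of the final $\ll$ via $\log\widehat M_n \ge \log 2$ is a small detail that the paper leaves implicit.
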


\begin{proof}
For $q \in Q_n$, \eqref{eq:integral-height} and
\eqref{eq:coefficient-hypothesis} give
\[
h(q) \le \log C_P + \kappa\log\widehat M_n\,.
\]
If $m, m' \in P_n^*$, put $q = w_nm$ and $q' = w_nm'$. Then $m / m' = q / q'$,
so Lemma~\ref{lem:height-identities} gives
\[
h(m / m') = h(q / q') \le h(q) + h(q')
\le 2\log C_P + 2\kappa\log\widehat M_n\,.
\]
Taking the maximum proves the assertion.
\end{proof}

\subsection{Exponent notation and a nontrivial place}

Choose $\gamma_1, \ldots, \gamma_\rho \in \Gamma$ whose images under
$\psi \circ \pi$ are the standard basis vectors of $\Z^\rho$. Every
$g \in \Gamma$ has a unique expression
\[
g = \xi\prod_{i = 1}^\rho\gamma_i^{z_i}\,,
\qquad
\xi \in \Gamma_{\mathrm{tors}}\,,
\qquad
(z_1, \ldots, z_\rho) := \psi(\pi(g))\,.
\]
If $G \subset \Gamma$ and $Y \ge 2$ satisfy
\[
\psi(\pi(G)) \subset [-C_GY, C_GY]^\rho\,,
\]
then the exponents of every $g \in G$ satisfy
$|z_i| \le C_GY$ for $1 \le i \le \rho$.
Thus each exponent coordinate ranges over an interval of length $O(Y)$.

\begin{lemma}[A nontrivial local absolute value]\label{lem:nontrivial-place}
There is a place $v$ of $F$ at which $|\gamma_i|_v \ne 1$ for at least one
$i$. At such a place, the map
\[
\lambda_v(x) := \log|x|_v
\qquad(x \in F^\times)
\]
factors on $\Gamma$ through a nonzero real linear functional
\[
\ell_v : \mathbb R^\rho \longrightarrow \mathbb R\,,
\qquad
\ell_v(\psi(\pi(g))) = \lambda_v(g)\,.
\]
\end{lemma}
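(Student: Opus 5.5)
We argue by contradiction for the existence statement and then check the factorization by direct computation. Suppose that $|\gamma_i|_v = 1$ for every place $v \in M_F$ and every $1 \le i \le \rho$. Since each $\gamma_i$ is nonzero, \eqref{eq:height-absolute-sum} gives
\[
2d\,h(\gamma_i) = \sum_{v \in M_F} n_v\bigl|\log|\gamma_i|_v\bigr| = 0 ,
\]
so $h(\gamma_i) = 0$. By the final assertion of Lemma~\ref{lem:height-identities}, $\gamma_i$ is then a root of unity, so $\gamma_i \in \Gamma \cap \mu_F = \Gamma_{\mathrm{tors}}$. Hence $\pi(\gamma_i)$ is trivial in $\overline{\Gamma}$. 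But $\psi(\pi(\gamma_i))$ is a standard basis vector of $\Z^\rho$, and such a vector exists because $\rho \ge 1$. This is a contradiction, so some place $v$ and some index $i$ satisfy $|\gamma_i|_v \ne 1$.

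Now fix such a place $v$ and define $\ell_v$ on $\mathbb R^\rho$ by
\[
\ell_v(z_1, \ldots, z_\rho) := \sum_{i = 1}^\rho z_i \log|\gamma_i|_v .
\]
This map is real linear. It is nonzero because its value on the basis vector $e_i$ is $\log|\gamma_i|_v \ne 0$ for the index $i$ chosen above.

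Next take any $g \in \Gamma$ and write it as $g = \xi\prod_i \gamma_i^{z_i}$ with $\xi \in \Gamma_{\mathrm{tors}}$ and $(z_i) = \psi(\pi(g))$. The absolute value $|\cdot|_v$ is multiplicative, so
\[
\lambda_v(g) = \log|\xi|_v + \sum_{i = 1}^\rho z_i\log|\gamma_i|_v .
\]
Some power $\xi^k$ with $k \ge 1$ equals $1$, so $|\xi|_v^k = 1$. Since $|\xi|_v$ is a positive real number, $|\xi|_v = 1$, and the first term vanishes. Alternatively, $h(\xi) = 0$ and \eqref{eq:local-height-bound} give the same conclusion. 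This yields $\lambda_v(g) = \ell_v(\psi(\pi(g)))$, as required.

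There is no substantial obstacle in this argument. The only points needing care are that every root of unity has trivial absolute value at every place, including the finite ones, and that the assumption $\rho \ge 1$ is what guarantees a generator outside $\Gamma_{\mathrm{tors}}$.
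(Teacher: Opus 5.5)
Your proof is correct and follows the paper's argument. If every $|\gamma_i|_v$ equalled $1$, each $\gamma_i$ would have height zero and hence be torsion, contradicting that $\psi(\pi(\gamma_i))$ is a standard basis vector; you then define $\ell_v$ with coefficients $\log|\gamma_i|_v$ and note that roots of unity contribute nothing at any place.
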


\begin{proof}
Each $\gamma_i$ has infinite order modulo torsion, so it is not a root
of unity. If its absolute value were $1$ at every place, its height would
be zero, contrary to Lemma~\ref{lem:height-identities}. Thus an appropriate
place exists. Since roots of unity have absolute value $1$ at every
place, we may define
\[
\ell_v(x_1, \ldots, x_\rho)
:= \sum_{i = 1}^\rho x_i\log|\gamma_i|_v\,.
\]
At least one coefficient is nonzero, and the claimed identity follows
from the expression for $g$ above.
\end{proof}

Fix such a place $v$ for the rest of the paper.

\subsection{Height counting in a multiplicative group}

We state the height-counting estimate for an arbitrary fixed finitely
generated subgroup, since it will also be used for the full unit group
in Section~4.

\begin{lemma}[Height counting]\label{lem:Gamma-height-count}
Let $\Gamma_0 \le F^\times$ be a fixed finitely generated subgroup, and let
$r$ be its rank modulo torsion. For every $T \ge 0$,
\[
|\{\gamma \in \Gamma_0 : h(\gamma) \le T\}|
\ll_{F, \Gamma_0} (T + 1)^r\,.
\]
\end{lemma}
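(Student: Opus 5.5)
The plan is to pass to a logarithmic embedding, in which the height becomes comparable to a norm on the exponent lattice; the count then reduces to counting lattice points in a ball of radius $O(T+1)$. Let $S$ be the finite set of places $w$ of $F$ at which some generator of $\Gamma_0$ has $|\cdot|_w \ne 1$. This set is finite because each of the finitely many generators has nontrivial absolute value at only finitely many places. Define the homomorphism
\[
L : \Gamma_0 \longrightarrow \mathbb R^{S}\,,
\qquad
L(\gamma) := \bigl(\log|\gamma|_w\bigr)_{w \in S}\,.
\]
For $\gamma \in \Gamma_0$ we have $|\gamma|_w = 1$ for all $w \notin S$. Hence \eqref{eq:height-absolute-sum} gives
\[
2d\,h(\gamma) = \sum_{w \in S}n_w\bigl|\log|\gamma|_w\bigr| \asymp \|L(\gamma)\|_1\,.
\]
So the set being counted maps into the $\ell^1$-ball of radius $O(T+1)$ in $\mathbb R^S$.

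\textbf{Step 1: the fibres of $L$.} The kernel of $L$ consists of elements with $|\gamma|_w = 1$ at every place, so $h(\gamma) = 0$. By Lemma~\ref{lem:height-identities} the kernel therefore lies in $\mu_F$, which is finite. Each fibre of $L$ thus has at most $|\mu_F|$ elements, and it suffices to count points of $L(\Gamma_0)$ in the ball.

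\textbf{Step 2: discreteness.} This is the main point. I need $L(\Gamma_0)$ to be a discrete subgroup of $\mathbb R^S$ of rank $r$. Discreteness follows from Northcott's theorem: a bounded region of $\mathbb R^S$ corresponds to elements of $F$ of bounded height. Since the degree is fixed at $d$, there are only finitely many such elements, so every bounded region contains finitely many points of $L(\Gamma_0)$.

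For the rank, note that $L(\Gamma_0) \cong \Gamma_0/(\Gamma_0 \cap \ker L)$. The kernel is torsion, so this quotient is free of rank $r$. A discrete subgroup of $\mathbb R^S$ that is free of rank $r$ is a lattice in its $r$-dimensional real span $V$.

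If I prefer to avoid quoting Northcott, I can argue directly for discreteness. If $L(\gamma)$ is small, then all absolute values of $\gamma$ on $S$ are close to $1$. Write $\gamma = a/b$ with $b$ a fixed $S$-integral denominator bound. The finite-place coordinates are then discrete (integer multiples of $\log p/e$), so a small vector forces those coordinates to vanish, making $\gamma$ a unit. The archimedean coordinates are then bounded, so all conjugates of $\gamma$ are bounded, and only finitely many algebraic integers of bounded degree have bounded conjugates.

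\textbf{Step 3: the lattice-point count.} For a lattice $\Lambda$ of rank $r$ in an $r$-dimensional space $V$, the number of lattice points in a ball of radius $R \ge 1$ is $\ll_\Lambda R^r$. To see this, place disjoint translates of a fixed fundamental domain, of diameter $D$, around each point. These translates all lie in the ball of radius $R + D$, and comparing volumes gives the bound.

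Taking $R \asymp T + 1$ and multiplying by the fibre bound $|\mu_F|$ from Step 1 gives $\ll_{F,\Gamma_0} (T+1)^r$. All constants depend only on $S$, the weights $n_w$, the lattice $L(\Gamma_0)$, and $|\mu_F|$, hence only on $F$ and $\Gamma_0$.

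The case $r = 0$ is consistent: then $\Gamma_0$ is finite and the count is bounded by a constant.
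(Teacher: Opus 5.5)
Your proof is correct and follows essentially the same route as the paper: embed logarithmically over a finite set $S$ of places, identify the height with a weighted $\ell^1$ norm, note that the kernel is torsion, recognise the image as a rank-$r$ lattice in its real span, and count lattice points in a ball of radius $O(T+1)$. The only real difference is how you get discreteness of the image: you use Northcott's theorem (or your direct valuation-plus-bounded-conjugates argument), whereas the paper embeds $\Gamma_0$ in $\mathcal O_{F,S}^\times$ and quotes the $S$-unit theorem, of which only this easier discreteness part is actually needed for an upper bound.
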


\begin{proof}
Let $S$ consist of all archimedean places together with the finite places
at which some member of a fixed generating set for $\Gamma_0$ has
nonzero valuation. This is a fixed finite set. Define the group of
$S$-units by
\[
\mathcal O_{F, S}^{\times }
:= \{\alpha \in F^\times : |\alpha|_w = 1\text{ for every }w \notin S\}\,.
\]
Then $\Gamma_0 \subseteq \mathcal O_{F, S}^{\times }$.

The $S$-unit theorem states that
$\mathcal O_{F, S}^{\times } / \mu_F$ is free abelian of rank $|S| - 1$, and
that
the logarithmic map
\[
\lambda_S : \mathcal O_{F, S}^{\times } / \mu_F \longrightarrow \mathbb R^S\,,
\qquad
\lambda_S(\alpha\mu_F)
:= \bigl(n_w\log|\alpha|_w\bigr)_{w \in S}
\]
has image a full lattice in the hyperplane
\[
\left\{(x_w)_{w \in S} : \sum_{w \in S}x_w = 0\right\}\,.
\]
Here a lattice is the set of integer combinations of a real basis of its
span. We use this form of the theorem as in
\cite[Chapter~3]{Narkiewicz2004}. The hyperplane condition is exactly the
product formula, and the kernel before quotienting is $\mu_F$, by
Lemma~\ref{lem:height-identities}.

The image of
$\Gamma_0 / (\Gamma_0 \cap \mu_F)$ is a subgroup of this lattice. It is
therefore a rank-$r$ lattice in its real span. To see the rank assertion
explicitly, choose lattice coordinates for the full $S$-unit lattice.
The subgroup then lies in an integer lattice, where the rank of an
integer matrix over $\Q$ agrees with its rank over $\mathbb R$.

For an $S$-unit $\gamma$, \eqref{eq:height-absolute-sum} gives
\[
2d\,h(\gamma) = \|\lambda_S(\gamma\mu_F)\|_1\,.
\]
Thus bounded height is a bounded-norm condition in a fixed lattice.
Choose a basis $\mathbf b_1, \ldots, \mathbf b_r$ of the subgroup lattice.
The inverse of the coordinate map on its real span is bounded, so
\[
\left\|\sum_{i = 1}^r z_i\mathbf b_i\right\|_1 \le 2dT
\quad \Longrightarrow \quad
|z_i| \ll T
\qquad(1 \le i \le r)\,.
\]
There are $O((T + 1)^r)$ possible integer vectors. Each logarithmic vector
has at most $|\Gamma_0 \cap \mu_F|$ preimages in $\Gamma_0$, which proves
the bound. If $r = 0$, the group is finite and the same assertion holds.
\end{proof}

\subsection{Fibers of the multiplication map}

\begin{lemma}[Multiplication fibers]\label{lem:A-size}
Let $P \subset F$ be finite with $P^* \ne \varnothing$, and let
$\varnothing \ne G \subset \Gamma$ be finite. Let $A := PG$ and $A^* := P^*G$.
Then every
fiber of
\[
P^* \times G \longrightarrow A^*\,,
\qquad
(m, g) \longmapsto mg
\]
has size $O((1 + R(P))^\rho)$. Consequently,
\[
\frac{|P^*||G|}{(1 + R(P))^\rho}
\ll |A^*| \le |P^*||G|\,.
\]
\end{lemma}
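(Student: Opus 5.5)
Fix a fiber: an element $a \in A^*$ together with one representation $a = m_0g_0$, where $m_0 \in P^*$ and $g_0 \in G$. The plan is to map the fiber injectively into a set of elements of $\Gamma$ of small height, and then apply Lemma~\ref{lem:Gamma-height-count} with $\Gamma_0 = \Gamma$, whose rank modulo torsion is $\rho$.

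Let $(m, g)$ be any pair in the fiber, so $mg = m_0g_0$. Then
\[
g/g_0 = m_0/m \,.
\]
The left side lies in $\Gamma$, since $G \subset \Gamma$. The right side has height at most $R(P)$, by the definition of the ratio height. Hence the map
\[
(m, g) \longmapsto g/g_0
\]
sends the fiber into $\{\gamma \in \Gamma : h(\gamma) \le R(P)\}$. This map is injective: the value $g/g_0$ determines $g$, and then $m = a/g$ is determined as well. Lemma~\ref{lem:Gamma-height-count} with $T = R(P)$ therefore bounds the fiber size by $O((1 + R(P))^\rho)$, with implicit constant depending only on $F$ and $\Gamma$, i.e.\ on the fixed data.

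For the consequence, the upper bound $|A^*| \le |P^*||G|$ holds simply because $A^*$ is the image of $P^* \times G$. For the lower bound, the domain $P^* \times G$ is partitioned into the fibers over the elements of $A^*$. Each fiber has size $\ll (1 + R(P))^\rho$, so
\[
|P^*||G| \ll |A^*|(1 + R(P))^\rho \,,
\]
which rearranges to the stated inequality.

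There is no real obstacle. The only point needing care is that the ratio $m_0/m$ is a genuine element of $\Gamma$ once it equals $g/g_0$. This makes Lemma~\ref{lem:Gamma-height-count} applicable in $\Gamma$ itself rather than in some larger group. Note also that zero never appears, since we work only with $P^*$.
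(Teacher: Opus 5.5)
Your proposal is correct and follows the paper's proof essentially step for step. You fix a base factorization $m_0g_0$, observe that $g/g_0 = m_0/m$ lies in $\Gamma$ with height at most $R(P)$, count such ratios via Lemma~\ref{lem:Gamma-height-count}, and use injectivity together with the partition of $P^* \times G$ into fibers to obtain both inequalities.
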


\begin{proof}
Fix $x \in A^*$ and one factorization $x = m_0g_0$ with
$m_0 \in P^*$ and $g_0 \in G$. If $x = mg$ is any other factorization,
then
\[
\frac{g}{g_0} = \frac{m_0}{m} \in \Gamma\,,
\qquad
h(g / g_0) = h(m_0 / m) \le R(P)\,.
\]
By Lemma~\ref{lem:Gamma-height-count}, there are at most
$O((1 + R(P))^\rho)$ possibilities for $g / g_0$. Each such ratio determines
$g$ and then $m = x / g$ uniquely. Thus every fiber has the stated size.
Dividing the number $|P^*||G|$ of input pairs by this bound proves the
lower estimate. The upper estimate is immediate.
\end{proof}

Let $P$, $G$, $A$, and $A^*$ be as in Lemma~\ref{lem:A-size}, and let
$Y \ge 2$. Suppose that $C_h > 0$, $c_G > 0$, and $C_G \ge 1$ are fixed, and
that
\[
R(P) \le C_h\log Y\,,
\qquad
\psi(\pi(G)) \subset [-C_GY, C_GY]^\rho\,,
\qquad
|G| \ge c_GY^\rho\,.
\]
Put $M := |P^*|$. Then $|G| \asymp Y^\rho$ and the preceding lemma gives
\begin{equation}\label{eq:low-height-size}
\frac{MY^\rho}{(\log Y)^\rho}
\ll |A^*| \le |A| \ll MY^\rho\,.
\end{equation}
The upper bound for $A$ also accounts for a possible zero element, since
$M \ge 1$. In addition, choosing any $m_0 \in P^*$ gives
$m_0G \subseteq A$, so
\begin{equation}\label{eq:A-at-least-G}
|A| \ge |G| \gg Y^\rho\,.
\end{equation}

\section{Asymptotically maximal sumsets from logarithmic separation}

The following result is the input used later in
the proof of Theorem~\ref{thm:main} to force an asymptotically maximal sumset
when product growth is small.

\begin{theorem}[Asymptotically maximal sumsets for low-ratio-height
products]\label{thm:maximal}
Fix $C_h > 0$, $c_G > 0$, and $C_G \ge 1$. Let $Y \ge 2$, let $P \subset F$ be
finite with $P^* := P \setminus \{0\} \ne \varnothing$, and let
$G \subset \Gamma$ be finite. Suppose that
\[
R(P) \le C_h\log Y\,,
\qquad
\psi(\pi(G)) \subset [-C_GY, C_GY]^\rho\,,
\qquad
|G| \ge c_GY^\rho\,.
\]
Then, for $A := PG$, there exists a constant $C > 0$ such that
\[
\frac{|A + A|}{|A|^2}
=
\frac{1}{2} + O\left(\frac{(\log Y)^C}{Y}\right)\,.
\]
The constants are uniform in $P$, $G$, and $Y$, and may depend on
$F$, $\Gamma$, $\psi$, $C_h$, $c_G$, and $C_G$.
\end{theorem}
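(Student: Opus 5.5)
We write $N := |A|$ and compare $|A+A|$ with the number of unordered pairs $\{a,a'\}\subset A$ (including $a=a'$), which is $N(N+1)/2$. Define the unordered collision excess
\[
E := \sum_{s \in A+A}\bigl(r(s) - 1\bigr),
\]
where $r(s)$ is the number of unordered pairs $\{a,a'\}$ with $a+a'=s$. Then $|A+A| = N(N+1)/2 - E$, so $|A+A|/N^2 = \tfrac12 + O(1/N) - E/N^2$. Since $N \gg Y^\rho \ge Y$ by \eqref{eq:A-at-least-G}, it suffices to prove
\[
E \ll N^2(\log Y)^C/Y .
\]
Moreover $E$ is at most the number of pairs of distinct unordered representations $\{a,a'\}\ne\{b,b'\}$ with $a+a'=b+b'$. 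We split these according to the fixed place $v$ of Lemma~\ref{lem:nontrivial-place}.

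Call a pair $\{a,a'\}$ with $a,a'\in A^*$ \emph{close} if $|\lambda_v(a)-\lambda_v(a')| \le K\log Y$, where $K$ is a large constant to be fixed. Pairs involving $0$ are harmless. Writing $a = mg$ and $a' = m'g'$, we have $\lambda_v(a)-\lambda_v(a') = \lambda_v(m/m') + \ell_v(z-z')$, with $z,z'$ the exponent vectors, and $|\lambda_v(m/m')| \le dR(P) \ll \log Y$ by \eqref{eq:local-height-bound}. So $z-z'$ lies in a strip $\{|\ell_v(w)| \ll \log Y\}$ inside $[-2C_GY,2C_GY]^\rho$. Because $\ell_v$ is a nonzero linear functional, this strip contains $O(Y^{\rho-1}\log Y)$ lattice points. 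Counting the triples $(m,m',g)$ and the choices of $g'$ in the strip, and using the fiber bound of Lemma~\ref{lem:A-size}, the number of close pairs $(a,a')$ is at most $M^2 Y^{2\rho-1}\log Y \ll N^2(\log Y)^{2\rho+1}/Y$ by \eqref{eq:low-height-size}. Each close pair contributes at most one unit of excess charged to it: charge each colliding class to all but one of its members, preferring to keep a far representation. This bounds the contribution of collision classes containing a close pair.

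The remaining case, which I expect to be the main obstacle, is the \emph{separation claim}: if $a+a' = b+b'$ with both pairs far, then $\{a,a'\} = \{b,b'\}$. Order the pairs so that $|a|_v > |a'|_v$ and $|b|_v>|b'|_v$. If $v$ is finite, the ultrametric inequality gives $|a|_v = |a+a'|_v = |b|_v$. Then
\[
a/b - 1 = (b'-a')/b ,
\]
and the right side has $v$-adic size at most $Y^{-K}$ times a bounded factor. The key point is that $a/b = (m/m_b)\,\gamma$ with $\gamma \in \Gamma$ of height $O(Y)$ and $h(m/m_b) \ll \log Y$. Here I would apply Yu's theorem (Matveev's at archimedean $v$) to the linear form $\log(m/m_b) + \sum z_i\log\gamma_i$. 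The height of the single non-fixed algebraic number is $O(\log Y)$, and the exponents are $O(Y)$, so we get a lower bound $|a/b-1|_v \ge \exp(-C'(\log Y)^2)$ or similar, unless $a = b$. At archimedean places one must also treat $\log$ versus $\operatorname{Log}$ via \eqref{eq:complex-log-height}.

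This lower bound is weaker than $Y^{-K}$ for fixed $K$, so I would instead define close as $|\lambda_v(a)-\lambda_v(a')| \le K(\log Y)^2$. That threshold beats the Yu--Matveev bound, and the strip count only worsens by a power of $\log Y$, which is absorbed into $C$. Once $a = b$, we get $a' = b'$. The archimedean case is the same: $|a+a'|_v = |a|_v(1+O(Y^{-K'}))$ forces $|a|_v$ and $|b|_v$ to be nearly equal, so the same linear-form argument applies to $a/b - 1$.

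Combining the two cases gives $E \ll N^2(\log Y)^C/Y$, and hence the theorem.
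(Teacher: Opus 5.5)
Your proposal is correct and follows essentially the paper's argument: the collision identity, a close/far split at the fixed place $v$ with the strip bound controlling close pairs, a Yu--Matveev separation showing that each sum has at most one far representation, and the fiber bound \eqref{eq:low-height-size} to compare $M^2Y^{2\rho-1}$ with $|A|^2$. The differences are cosmetic. The paper bounds all heights crudely by $C_1\log Y$ and so uses the threshold $C_{\mathrm{sep}}(\log Y)^{\rho+4}+\log 4$, whereas your sharper use of $O(1)$ heights for the fixed $\gamma_i$ gives roughly $(\log Y)^2$; Yu's bound may carry an extra $\log\log Y$ factor, so take the threshold to be a slightly larger fixed power of $\log Y$. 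Your use of $|\lambda_v(m/m')|\le dR(P)$ in the strip count is unnecessary, since fixing $(m,m',g)$ already confines $\lambda_v(g')$ to an interval of the threshold length.
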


We now prove the theorem. Throughout this section, $P$, $G$, and $Y$
satisfy its hypotheses, and we write
\[
P^* := P \setminus \{0\}\,,
\qquad
A := PG\,,
\qquad
A^* := P^*G = A \setminus \{0\}\,,
\qquad
M := |P^*|\,.
\]
The argument combines a $\rho$-dimensional strip estimate with a
separation lemma at the place $v$ fixed in
Lemma~\ref{lem:nontrivial-place}. The separation input is a consequence
of Yu's estimates for linear forms in $p$-adic logarithms at finite places
and Matveev's estimates for linear forms in complex logarithms at
archimedean places.

\subsection{Unordered collisions}

We begin by fixing a standard definition of unordered representations.

\begin{definition}\label{def:unrep}
Let $A$ be a finite subset of a commutative group. For each $t$, define the
unordered representation function
\[
u_A(t)
:=
\bigl|\{\{a, b\} : a, b \in A, \ a + b = t\}\bigr|\,,
\]
where $\{a, b\}$ denotes a two-element multiset, so repetitions are allowed.
Define the \emph{unordered collision excess}
\[
\Colun(A) := \sum_t \max\{u_A(t) - 1,\, 0\}
=
\sum_{t : \,u_A(t) \ge 1}\bigl(u_A(t) - 1\bigr)\,.
\]
\end{definition}

\begin{lemma}[Collision identity]\label{lem:collision-identity}
For any finite subset $A \ne \varnothing$ of a commutative group,
\[
|A + A|
=
\frac{|A|^2 + |A|}{2} - \Colun(A)\,.
\]
Consequently,
\[
\frac{|A + A|}{|A|^2}
=
\frac{1}{2} + \frac{1}{2|A|} - \frac{\Colun(A)}{|A|^2}\,.
\]
\end{lemma}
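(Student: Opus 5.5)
The identity follows by double counting the unordered pairs of elements of $A$, grouped according to their sum. Let $\mathcal U$ be the set of two-element multisets $\{a, b\}$ with $a, b \in A$. Such a multiset is either a pair of distinct elements or a repeated element $\{a, a\}$. Hence
\[
|\mathcal U| = \binom{|A|}{2} + |A| = \frac{|A|^2 + |A|}{2}\,.
\]
The map $\{a, b\} \mapsto a + b$ from $\mathcal U$ onto $A + A$ is well defined because the group is commutative. It is surjective by the definition of $A + A$, and the fiber over $t$ has exactly $u_A(t)$ elements. Summing fiber sizes gives
\[
\sum_t u_A(t) = |\mathcal U|\,,
\]
and the set of $t$ with $u_A(t) \ge 1$ is precisely $A + A$.

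Next I would split each fiber size as $u_A(t) = 1 + (u_A(t) - 1)$ for $t \in A + A$. This gives
\[
|\mathcal U| = |A + A| + \sum_{t : \,u_A(t) \ge 1}\bigl(u_A(t) - 1\bigr) = |A + A| + \Colun(A)\,,
\]
where the last step is the second expression for $\Colun(A)$ in Definition~\ref{def:unrep}. The two expressions there agree because the terms with $u_A(t) = 0$ contribute $\max\{-1, 0\} = 0$. Rearranging yields the first identity of the lemma.

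The normalized form then follows by dividing through by $|A|^2$, which is nonzero since $A \ne \varnothing$. There is no genuine obstacle here. The only points needing care are to count the diagonal multisets $\{a, a\}$ correctly, so that $|\mathcal U|$ is $(|A|^2 + |A|)/2$ rather than $\binom{|A|}{2}$, and to note that commutativity is what makes the sum of an unordered pair well defined.
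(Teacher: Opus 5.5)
Your proposal is correct and follows essentially the same argument as the paper. Like the paper, you partition the $(|A|^2+|A|)/2$ two-element multisets by their sum, identify $\{t : u_A(t)\ge 1\}$ with $A+A$, and subtract to obtain $\Colun(A)$.
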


\begin{proof}
The number of two-element multisets drawn from $A$ is $(|A|^2 + |A|) / 2$.
Partition these multisets by their sum. For each $t$, there are $u_A(t)$
unordered pairs with sum $t$, and these $u_A(t)$ pairs contribute exactly
$1$ distinct element to $A + A$ if $u_A(t) \ge 1$, and contribute nothing
otherwise. Hence
\[
|A + A| = \bigl|\{t : u_A(t) \ge 1\}\bigr|\,.
\]
Also,
\[
\sum_t u_A(t) = \frac{|A|^2 + |A|}{2}\,.
\]
Therefore
\begin{align*}
\Colun(A)
&=
\sum_{t : \,u_A(t) \ge 1}\bigl(u_A(t) - 1\bigr) \\
&=
\sum_t u_A(t) - \bigl|\{t : u_A(t) \ge 1\}\bigr| \\
&=
\frac{|A|^2 + |A|}{2} - |A + A|\,,
\end{align*}
which proves the first identity. Dividing by $|A|^2$ yields the second.
\end{proof}

We also need to remove the possible zero element before taking
multiplicative ratios and logarithmic absolute values.

\begin{lemma}[Adjoining zero]\label{lem:adjoin-zero}
Let $A$ be a finite subset of a commutative group with $0 \notin A$, and put
$B := A \cup \{0\}$. Then
\[
\Colun(B) \le \Colun(A) + |A| + 1\,.
\]
\end{lemma}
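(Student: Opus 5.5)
The cleanest route is to compare the representation functions $u_B$ and $u_A$ pointwise and then sum. Adjoining $0$ creates exactly $|A|+1$ new unordered multisets, namely $\{0,a\}$ for $a \in A$ (with sum $a$) and $\{0,0\}$ (with sum $0$). These are all distinct multisets not already counted in $u_A$, since $0 \notin A$. Hence, for every $t$,
\[
u_B(t) = u_A(t) + \mathbf 1_{t \in A} + \mathbf 1_{t = 0}\,.
\]
Write $\varepsilon(t) := \mathbf 1_{t \in A} + \mathbf 1_{t=0}$, so that $\sum_t \varepsilon(t) = |A| + 1$.

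The key step is an elementary inequality. For nonnegative integers $u$ and $\varepsilon$,
\[
\max\{u + \varepsilon - 1,\, 0\} \le \max\{u - 1,\, 0\} + \varepsilon\,.
\]
To check it, suppose first that $u \ge 1$; then the left side is $u - 1 + \varepsilon$, which equals the right side. If instead $u = 0$, the left side is $\max\{\varepsilon - 1, 0\} \le \varepsilon$.

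Applying this with $u = u_A(t)$ and $\varepsilon = \varepsilon(t)$ and summing over all $t$ gives
\[
\Colun(B) \le \Colun(A) + \sum_t \varepsilon(t) = \Colun(A) + |A| + 1\,.
\]
All the sums involved are finite, since only finitely many $t$ have $u_B(t) \neq 0$.

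There is no real obstacle here. The only point needing care is the bookkeeping of the new multisets: one must confirm that $\{0,a\}$ and $\{0,0\}$ are not already counted in $u_A$, which holds because $0 \notin A$, and that each new multiset is counted exactly once.
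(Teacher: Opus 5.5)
Your proof is correct and follows the paper's approach: both observe that the only new multisets are $\{0,a\}$ for $a \in A$ and $\{0,0\}$, and that each one raises the collision excess by at most one. Your pointwise identity $u_B(t) = u_A(t) + \varepsilon(t)$ together with $\max\{u+\varepsilon-1,0\} \le \max\{u-1,0\} + \varepsilon$ simply makes the paper's one-line remark explicit.
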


\begin{proof}
Passing from $A$ to $B$ adds exactly the $|A| + 1$ unordered pairs
\[
\{0, a\}\,, \qquad(a \in A)
\]
and $\{0, 0\}$. Adding one unordered representation can increase the
collision excess by at most $1$. Hence adjoining all of these
representations increases the collision excess by at most $|A| + 1$.
\end{proof}

\subsection{A logarithmic separation input}

We derive the following separation estimate from the results of Yu and
Matveev for linear forms in logarithms.

\begin{corollary}[Fixed-field logarithmic
separation]\label{cor:local-separation}
Let $\mathfrak v$ be a fixed place of $F$, and let $s \ge 1$ be fixed. For
every fixed constant $C_0 > 0$ there exists a constant
$C_{\mathrm{loc}} > 0$, depending only on
$F$, $\mathfrak v$, $s$, and $C_0$, with the following property.

Let $Y \ge 2$, let $\alpha_1, \ldots, \alpha_s \in F^\times$, and let
$b_1, \ldots, b_s \in \Z$. Suppose that
\[
\Theta := \alpha_1^{b_1}\cdots\alpha_s^{b_s} - 1 \ne 0\,,
\]
and that
\[
h(\alpha_i) \le C_0\log Y\,,
\qquad
|b_i| \le C_0Y
\qquad(1 \le i \le s)\,.
\]
Then
\[
|\Theta|_{\mathfrak v}
\ge
\exp\left(-C_{\mathrm{loc}}(\log Y)^{s + 3}\right)\,.
\]
\end{corollary}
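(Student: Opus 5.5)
We apply the standard lower bounds for linear forms in logarithms directly, in the Baker--Wüstholz/Matveev shape
\[
\log|\Theta|_{\mathfrak v} \ge -C(F,\mathfrak v,s)\,\Bigl(\prod_{i=1}^s A_i\Bigr)\log B\,,
\]
where $A_i \asymp \max\{h(\alpha_i),\,|\operatorname{Log}\alpha_i|,\,1\}$ and $B \asymp \max\{|b_i|,\,3\}$. Each $A_i$ is $O(\log Y)$ and $\log B = O(\log Y)$, so the product is $O((\log Y)^{s+1})$. This is better than the claimed exponent $s+3$, so there is slack for the logarithmic factors in Yu's bound. The argument splits according to whether $\mathfrak v$ is archimedean or finite.

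\emph{Archimedean case.} Let $\sigma$ be an embedding representing $\mathfrak v$; we may work in $\mathbb C$ via $\sigma$, which preserves heights. Put
\[
\Lambda := \sum_{i} b_i\operatorname{Log}\sigma(\alpha_i) + 2\pi i\,b_0\,,
\]
where $b_0\in\Z$ is chosen so that $|\operatorname{Im}\Lambda|\le\pi$. Then $e^\Lambda = 1+\sigma(\Theta)$, and $\Lambda\neq0$ because $\Theta\ne0$.
\begin{itemize}
\item If $|\sigma(\Theta)|\ge 1/2$, the bound is trivial.
\item Otherwise the elementary inequality $|\Lambda|\le 2|\sigma(\Theta)|$ holds: $\Lambda$ is the principal logarithm of $1+\sigma(\Theta)$ and $|\log(1+z)|\le 2|z|$ for $|z|\le 1/2$.
\end{itemize}
By \eqref{eq:complex-log-height}, $|b_0| \ll \sum|b_i|(d\,h(\alpha_i)+\pi) \ll Y\log Y$. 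Apply Matveev's theorem to the $s+1$ numbers $\alpha_1,\dots,\alpha_s,-1$, using $\operatorname{Log}(-1)=i\pi$ and replacing $b_0$ by $2b_0$. The field has degree $d$ and each $A_i \ll d\,h(\alpha_i)+\pi \ll \log Y$, so
\[
\log|\Lambda| \ge -C\,(\log Y)^{s}\log(Y\log Y) \gg -(\log Y)^{s+1}.
\]
Taking $(\log Y)^{s+3}$ as the final bound absorbs constants, since $\log Y\ge\log 2$.

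\emph{Finite case.} Let $\mathfrak v$ correspond to $\mathfrak q\mid p$; here $p$ and $N\mathfrak q$ are fixed.
\begin{itemize}
\item If $|\Theta|_{\mathfrak v}\ge 1$ there is nothing to prove.
\item Otherwise $v_{\mathfrak q}(\Theta)>0$, so the product $\prod\alpha_i^{b_i}$ is a $\mathfrak q$-unit.
\end{itemize}
Yu's theorem has the form
\[
v_{\mathfrak q}(\Theta) \le C(F,p,s)\,\prod_i h'(\alpha_i)\,\log B\,.
\]
Some versions require the $\alpha_i$ themselves to be $\mathfrak q$-units, or include factors such as $\log(\max h')$. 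To meet the $\mathfrak q$-unit hypothesis, we rewrite the product in terms of $\mathfrak q$-units. Choose a fixed $\pi_{\mathfrak q}\in F^\times$ generating $\mathfrak q^{h_F}$ up to a fixed ideal coprime to $\mathfrak q$. (Simpler: take $\pi_{\mathfrak q}$ with $v_{\mathfrak q}(\pi_{\mathfrak q})=1$ and $h(\pi_{\mathfrak q})=O(1)$.) Set
\[
\alpha_i' := \alpha_i\,\pi_{\mathfrak q}^{-v_{\mathfrak q}(\alpha_i)}, \qquad |v_{\mathfrak q}(\alpha_i)|\ll h(\alpha_i)\ll\log Y \text{ by \eqref{eq:local-height-bound}},
\]
so $h(\alpha_i')\ll\log Y$. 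Since $\prod\alpha_i^{b_i}$ is a $\mathfrak q$-unit, $\sum_i b_iv_{\mathfrak q}(\alpha_i)=0$ and the factors of $\pi_{\mathfrak q}$ cancel, so the product equals $\prod\alpha_i'^{b_i}$ with the same exponents. Yu then gives
\[
v_{\mathfrak q}(\Theta)\ll(\log Y)^s\log Y\cdot(\log\log Y)^{O(1)} \ll (\log Y)^{s+3}.
\]
Finally $|\Theta|_{\mathfrak v}=p^{-v_{\mathfrak q}(\Theta)/e_{\mathfrak q}}$, which is the claim.

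The main obstacle is bookkeeping: matching the exact hypotheses of the cited theorems and showing that the implied constants depend only on $F,\mathfrak v,s,C_0$. On the archimedean side this means handling the branch of the logarithm (the extra $2\pi i b_0$ term). On the finite side it means the $\mathfrak q$-unit normalization and Yu's auxiliary logarithmic factors. The generous exponent $s+3$ is meant to absorb these factors.
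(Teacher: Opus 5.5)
Your proposal is correct and follows essentially the same route as the paper: Matveev's bound at archimedean places after inserting a $2\pi i$ correction term (the paper writes it as $2q\operatorname{Log}(-1)$), Yu's bound at finite places, with every height parameter $\ll \log Y$ and every exponent $\ll Y$. The differences are bookkeeping in the finite case: the paper applies Yu's Theorem~1' directly, with no $\mathfrak q$-unit normalization, and appends a fixed primitive element $\zeta$ of $F$ with exponent $0$, so the bases generate exactly $F$ and number at least two; this settles the field-dependence of the constant and the case $s=1$, which your sketch leaves implicit.
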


\begin{proof}
Choose a fixed nonzero algebraic integer $\zeta$ such that $F = \Q(\zeta)$.
First suppose that $\mathfrak v$ is a finite place, corresponding
to a prime ideal $\mathfrak p$ of $\mathcal O_F$. Let $p$ be the rational
prime below $\mathfrak p$, and let $e_{\mathfrak p}$ and
$f_{\mathfrak p}$ be the ramification index and residue degree of
$\mathfrak p$ over $p$, respectively. Set $\nu := s + 1$; this is the
parameter $n$ in Yu's theorem.
To make the field generated by the bases in Yu's theorem equal to the fixed
field $F$, define
\[
\beta_i := \alpha_i\,,
\qquad
e_i := b_i
\qquad(1 \le i \le s)\,,
\]
and append the fixed base and zero exponent
\[
\beta_\nu := \zeta\,,
\qquad
e_\nu := 0\,.
\]
Then
\[
\Q(\beta_1, \ldots, \beta_\nu) = \Q(\zeta) = F
\]
and
\[
\beta_1^{e_1}\cdots\beta_\nu^{e_\nu} - 1 = \Theta \ne 0\,.
\]
In particular, the integers $e_1, \ldots, e_\nu$ are not all zero. Since
$\nu = s + 1 \ge 2$, Yu's Theorem~1'
\cite[Theorem~1', pp.~246--247]{Yu94} applies with Yu's parameters
\[
K_0 = F\,,
\qquad
D_0 = d\,,
\qquad
\mathfrak p_0 = \mathfrak p\,,
\qquad
f_0 = f_{\mathfrak p}\,,
\qquad
e_0 = e_{\mathfrak p}\,.
\]
Yu's additive valuation $\operatorname{ord}_{\mathfrak p_0}$ is precisely
the valuation $v_{\mathfrak p}$ used in this paper.

We next verify Yu's height parameters. For every $\alpha \in F^\times$,
Lemma~\ref{lem:height-identities} gives
\[
\bigl|\log|\alpha|\bigr| \le d\,h(\alpha)\,.
\]
By \eqref{eq:complex-log-height}, we also have
\[
|\operatorname{Log}\alpha|
\le
\bigl|\log|\alpha|\bigr| + \pi
\le
d\,h(\alpha) + \pi\,.
\]
Yu's condition~(0.21)*, written using this logarithm convention,
requires ordered real numbers
$V_1 \le \cdots \le V_\nu$ satisfying
\[
V_i \ge
\max\left\{
 h(\beta_i),\,
 \frac{|\operatorname{Log}\beta_i|}{10d},\,
 \frac{1}{2}\left(\frac{f_{\mathfrak p}}{d}\right)^2\log p
\right\}
\qquad(1 \le i \le \nu)\,.
\]
Since $Y \ge 2$, the hypotheses on the $\alpha_i$, the fixed height and
logarithm of $\zeta$, and the preceding inequality allow us to choose
\[
V_1 = \cdots = V_\nu =: V_* := C_1\log Y\,,
\]
where $C_1 > 0$ depends only on $F$, $\mathfrak p$, and $C_0$.
The ordering condition is then automatic. Moreover, Yu's auxiliary
quantity $V$ is either $V_{\nu - 1}$ or $V_\nu$, so in either case
\[
V = V_*\,.
\]

For Yu's exponent parameters, take
\[
B := \max\{3,\, |e_1|,\, \ldots,\, |e_\nu|\}
=
\max\{3,\, |b_1|,\, \ldots,\, |b_s|\}
\]
and $B_i := B$ for $1 \le i \le \nu$. Then
\[
B \le \max\{3,\, C_0Y\} \ll_{C_0} Y\,,
\]
and Yu's conditions
\[
B \ge \max\{|e_1|,\, \ldots,\, |e_\nu|,\, 3\}\,,
\qquad
\max\{1,\, |e_i|\} \le B_i \le B
\]
are satisfied.

With these choices, Theorem~1' gives
\begin{align*}
v_{\mathfrak p}(\Theta)
< {}&
C'(\nu + 1)^{2\nu + 4}
\frac{p^{f_{\mathfrak p}} - 1}{(\log p)^{\nu + 2}}
\left(\frac{d}{f_{\mathfrak p}}\right)^{2\nu + 2}
V_*^\nu\log(4d^2B)\\
&\quad \times
\max\left\{
\log\bigl(2^{13} \cdot 3\nu(\nu + 1)d^3V_*\bigr),\,
\frac{f_{\mathfrak p}\log p}{\nu}
\right\}\,,
\end{align*}
where $C'$ is Yu's explicit constant, depending only on
$\nu$, $p$, and $e_{\mathfrak p}$. All factors outside
$V_*^\nu\log(4d^2B)$ and the final maximum are fixed. Since
$V_* \asymp \log Y$ and $B \ll Y$, each of the two remaining logarithmic
factors is $O(\log Y)$. Hence, for a constant $C_4 > 0$ depending only on
$F$, $\mathfrak p$, $s$, and $C_0$,
\[
v_{\mathfrak p}(\Theta)
\le C_4(\log Y)^{\nu + 2}
=
C_4(\log Y)^{s + 3}\,.
\]
Since
\[
\log|\Theta|_{\mathfrak v}
=
-\frac{\log p}{e_{\mathfrak p}}v_{\mathfrak p}(\Theta)\,,
\]
absorbing the fixed normalization factor into $C_{\mathrm{loc}}$ proves
the required bound at a finite place.

Now suppose that $\mathfrak v$ is archimedean, represented by an embedding
$\sigma : F \hookrightarrow \mathbb C$, and put
\[
z := \prod_{i = 1}^s\sigma(\alpha_i)^{b_i}\,.
\]
Then $z \ne 1$ and $|\Theta|_{\mathfrak v} = |z - 1|$. If $|z - 1| \ge 1 / 2$,
the
desired bound follows by taking $C_{\mathrm{loc}}$ sufficiently large. We may
therefore assume that $|z - 1| < 1 / 2$.

Set
\[
\Lambda := \operatorname{Log}z \ne 0\,.
\]
The sum of the logarithms of the factors differs from $\Lambda$ by an
integer multiple of $2\pi i$. Hence, for some $q \in \Z$,
\[
\Lambda
=
\sum_{i = 1}^s b_i\operatorname{Log}\sigma(\alpha_i)
- 2q\operatorname{Log}(-1)\,.
\]
Comparing imaginary parts gives
\[
2|q| \le \sum_{i = 1}^s|b_i| + 1 \ll Y\,.
\]
Thus every integer coefficient in this linear form is $O(Y)$.
Moreover, invariance of the absolute height under embeddings and the
height inequality used above give
\[
\bigl|\operatorname{Log}\sigma(\alpha_i)\bigr|
\le d\,h(\alpha_i) + \pi \ll \log Y\,.
\]
The additional base $-1$ has height $0$ and logarithm $\pi i$.

Discard terms with zero coefficient or zero logarithm, and let $t_0$
be the number of remaining terms. Since $\Lambda \ne 0$, we have
$1 \le t_0 \le s + 1$. If $t_0 = 1$, append the term $0 \cdot \log 2$ to the
sum. This leaves $\Lambda$
unchanged and ensures that the application of Matveev's
Corollary~2.3 has at least two logarithms. Reindex the resulting
terms and denote their number by $t$. Since $s \ge 1$, we have
$2 \le t \le s + 1$. Thus
\[
\Lambda = \sum_{j = 1}^t c_j\operatorname{Log}\eta_j\,,
\qquad
\eta_j \in \sigma(F)^\times\,,
\qquad
c_j \in \Z\,,
\]
with the logarithms chosen above and each
$\operatorname{Log}\eta_j \ne 0$. For Matveev's height parameters, denoted
here by $\mathcal A_j$, take
\[
\mathcal A_1 = \cdots = \mathcal A_t =: \mathcal A_* := C_2\log Y\,,
\]
where $C_2 > 0$ is a fixed constant large enough that
\[
\mathcal A_j \ge \max\{d\,h(\eta_j),\, |\operatorname{Log}\eta_j|,\, 0.16\}
\qquad(1 \le j \le t)\,.
\]
Since all $\mathcal A_j$ are equal, Matveev's exponent parameter is
\[
B = \max\{1,\, |c_1|,\, \ldots,\, |c_t|\} \ll Y\,.
\]
Matveev's estimate \cite[Corollary~2.3, p.~1219]{Matveev2000}, which does
not require linear independence of the logarithms, now gives
\[
\log|\Lambda|
>
-C_{\mathrm{Mat}} d^2 \mathcal A_*^t\log(ed)\log(eB)\,,
\]
where $C_{\mathrm{Mat}} > 0$ may be chosen to depend only on $t$, by taking
the larger of the real and complex constants. Since $t \le s + 1$ and
$Y \ge 2$, it follows that
\[
|\Lambda| \ge \exp\left(-C_3(\log Y)^{s + 2}\right)
\]
for a fixed constant $C_3 > 0$.

Finally, the power series for $\operatorname{Log}(1 + w)$ gives
\[
|\Lambda|
\le \frac{|z - 1|}{1 - |z - 1|}
\le 2|z - 1|\,.
\]
Consequently,
\[
|\Theta|_{\mathfrak v} = |z - 1|
\ge \frac{1}{2}\exp\left(-C_3(\log Y)^{s + 2}\right)
\ge \exp\left(-C_{\mathrm{loc}}(\log Y)^{s + 3}\right)\,,
\]
after increasing $C_{\mathrm{loc}}$. All constants depend only on the data
listed in the statement. This proves the corollary.
\end{proof}

\begin{lemma}[Separation of low-height multiplicative
translates]\label{lem:bounded-separation}
There exists a constant $C_{\mathrm{sep}} > 0$, depending only on the fixed
data of Theorem~\ref{thm:maximal}, such that, for
\[
m, m' \in P^*\,,
\qquad
g, g' \in G\,,
\qquad
mg \ne m'g'\,,
\]
we have
\[
\left|\frac{mg}{m'g'} - 1\right|_v
\ge
\exp\left(-C_{\mathrm{sep}}(\log Y)^{\rho + 4}\right)\,.
\]
\end{lemma}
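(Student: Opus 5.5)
We plan to write the ratio as a single multiplicative expression and apply Corollary~\ref{cor:local-separation} at the fixed place $v$, with $s = \rho + 1$.

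For $m, m' \in P^*$ and $g, g' \in G$, write $g = \xi\prod_i\gamma_i^{z_i}$ and $g' = \xi'\prod_i\gamma_i^{z'_i}$ in the exponent notation. Then
\[
\frac{mg}{m'g'} = \frac{m\xi}{m'\xi'}\prod_{i = 1}^\rho\gamma_i^{z_i - z'_i}\,.
\]
We take the bases $\alpha_i := \gamma_i$ for $1 \le i \le \rho$, with exponents $b_i := z_i - z'_i$. We also take the single extra base $\alpha_{\rho + 1} := m\xi/(m'\xi')$, with exponent $b_{\rho + 1} := 1$. The quantity $\Theta := \frac{mg}{m'g'} - 1$ is nonzero because $mg \ne m'g'$.

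Next we check the hypotheses of Corollary~\ref{cor:local-separation}.
\begin{enumerate}[label=(\roman*)]
\item The heights $h(\gamma_i)$ are fixed constants, hence at most $\log Y$ up to a fixed factor, since $Y \ge 2$.
\item By Lemma~\ref{lem:height-identities} (multiplying by roots of unity does not change the height), $h(\alpha_{\rho + 1}) = h(m/m') \le R(P) \le C_h\log Y$.
\item The exponents satisfy $|b_i| \le 2C_GY$ and $b_{\rho + 1} = 1$.
\end{enumerate}
So all hypotheses hold with $C_0 := \max\{C_h,\, 2C_G,\, \max_i h(\gamma_i)/\log 2,\, 1\}$, which is fixed. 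Applying the corollary with $\mathfrak v = v$ and $s = \rho + 1$ gives
\[
|\Theta|_v \ge \exp\left(-C_{\mathrm{loc}}(\log Y)^{\rho + 4}\right)\,.
\]
This is the claim with $C_{\mathrm{sep}} := C_{\mathrm{loc}}$, which depends only on $F$, $v$, $\rho$, and $C_0$, hence only on the fixed data.

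The one point needing care is the choice of grouping. The nonfixed ratio $m/m'$ must enter as a single base with exponent $1$: it has height $O(\log Y)$ but is not drawn from any fixed finite set. The torsion parts $\xi, \xi'$ are absorbed into this same base so that the number of bases stays $\rho + 1$, which yields the exponent $\rho + 4$. No further obstacle arises, since the corollary already handles both finite and archimedean $v$.
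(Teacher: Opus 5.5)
Your proposal is correct and is essentially the paper's own proof. The paper likewise places $m\xi/(m'\xi')$ into a single base with exponent $1$, takes the $\gamma_i$ as the remaining bases with exponents $z_i - z_i'$, and uses the same choice of $C_0$. It then applies Corollary~\ref{cor:local-separation} with $s = \rho + 1$ to obtain the exponent $\rho + 4$.
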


\begin{proof}
Write
\[
g = \xi\prod_{i = 1}^\rho\gamma_i^{z_i}\,,
\qquad
g' = \xi'\prod_{i = 1}^\rho\gamma_i^{z_i'}\,,
\]
where $\xi, \xi' \in \Gamma_{\mathrm{tors}}$ and
$|z_i|, |z_i'| \le C_GY$. Then
\[
\frac{mg}{m'g'}
=
\left(\frac{m}{m'}\frac{\xi}{\xi'}\right)
\prod_{i = 1}^\rho\gamma_i^{b_i}\,,
\qquad
b_i := z_i - z_i'\,,
\qquad
|b_i| \le 2C_GY\,.
\]
Set
\[
\alpha_0 := \frac{m}{m'}\frac{\xi}{\xi'}\,,
\qquad
c_0 := 1\,,
\]
and, for $1 \le i \le \rho$, set $\alpha_i := \gamma_i$ and $c_i := b_i$.
Since $\xi / \xi'$ is a root of unity,
Lemma~\ref{lem:height-identities} gives
\[
h(\alpha_0) = h(m / m') \le C_h\log Y\,.
\]
The remaining bases are fixed, so $h(\alpha_i) = O(1)$. Choose a fixed
constant
\[
C_0 \ge \max\left\{
1,\, C_h,\, 2C_G,\,
\frac{h(\gamma_1)}{\log 2},\, \ldots,\,
\frac{h(\gamma_\rho)}{\log 2}
\right\}\,.
\]
All bases then have height at most $C_0\log Y$, and all exponents have
absolute value at most $C_0Y$. The hypothesis $mg \ne m'g'$ says precisely
that
\[
\alpha_0^{c_0}\alpha_1^{c_1}\cdots\alpha_\rho^{c_\rho} - 1 \ne 0\,.
\]
Applying Corollary~\ref{cor:local-separation} with $s = \rho + 1$ proves the
bound.
\end{proof}

\subsection{Strip estimate}

We next prove the strip estimate used in the proof of
Proposition~\ref{prop:collision-bound}.

\begin{lemma}[Strip bound]\label{lem:strip-bound}
For every interval $J \subset \mathbb R$ of finite length,
\[
\left|\{g \in G : \lambda_v(g) \in J\}\right|
\ll
Y^{\rho - 1}\bigl(\operatorname{length}(J) + 1\bigr)\,.
\]
\end{lemma}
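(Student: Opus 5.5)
We reduce the count to lattice points of the exponent box lying in a slab. By Lemma~\ref{lem:nontrivial-place}, for $g \in G$ with exponent vector $z = \psi(\pi(g)) \in \Z^\rho$ we have $\lambda_v(g) = \ell_v(z)$. Here $\ell_v(x) = \sum_i a_i x_i$ with $a_i := \log|\gamma_i|_v$, and $a_{i_0} \ne 0$ for some fixed index $i_0$. Each exponent vector has at most $|\Gamma_{\mathrm{tors}}|$ preimages in $\Gamma$. It therefore suffices to bound the number of
\[
z \in \Z^\rho \cap [-C_GY, C_GY]^\rho
\quad\text{with}\quad
\ell_v(z) \in J\,,
\]
up to this fixed factor.

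We count these points by fibering over the coordinates other than $z_{i_0}$. There are at most $(2C_GY + 1)^{\rho - 1} \ll Y^{\rho - 1}$ choices for the remaining coordinates $(z_i)_{i \ne i_0}$; when $\rho = 1$ there is exactly one empty choice. Fix such a choice and put $c := \sum_{i \ne i_0} a_i z_i$. The condition $\ell_v(z) \in J$ becomes $a_{i_0} z_{i_0} \in J - c$. This says that the integer $z_{i_0}$ lies in an interval of length $\operatorname{length}(J)/|a_{i_0}|$. The number of such integers is at most
\[
\frac{\operatorname{length}(J)}{|a_{i_0}|} + 1
\ll \operatorname{length}(J) + 1\,,
\]
because $|a_{i_0}|$ is a fixed positive constant depending only on $F$, $\Gamma$, $\psi$ and $v$. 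Multiplying the two counts gives the bound
\[
O\bigl(Y^{\rho - 1}(\operatorname{length}(J) + 1)\bigr)\,.
\]

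There is no genuine obstacle. The only points needing care are the following. First, the implicit constant must be uniform in $J$ and $Y$; it depends only on $C_G$, $|\Gamma_{\mathrm{tors}}|$ and $\min_{a_i \ne 0}|a_i|$. Second, the torsion ambiguity is harmless because $\lambda_v$ kills roots of unity. Third, the argument does not use the lower bound $|G| \ge c_GY^\rho$, only the box containment.
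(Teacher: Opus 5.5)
Your proposal is correct and follows the paper's proof essentially step for step. You reduce to exponent vectors in the box via the torsion-fiber bound, fix a coordinate with nonzero coefficient $\log|\gamma_{i_0}|_v$, and fiber over the remaining $\rho - 1$ coordinates. Each fiber then contributes at most $\operatorname{length}(J)/|a_{i_0}| + 1$ integers.
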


\begin{proof}
Under the fixed identification $\psi : \overline{\Gamma} \to \Z^\rho$, the
image of $G$ lies in a box of side lengths $O(Y)$, and each exponent
vector has at most $|\Gamma_{\mathrm{tors}}|$ preimages. Moreover,
\[
\lambda_v(g) = \ell_v(\psi(\pi(g)))\,,
\]
where $\ell_v : \mathbb R^\rho \to \mathbb R$ is a fixed nonzero real linear
functional. Put $L_i := \log|\gamma_i|_v$ for $1 \le i \le \rho$. Then
\[
\ell_v(x_1, \ldots, x_\rho) = L_1x_1 + \cdots + L_\rho x_\rho\,,
\]
where the $L_i$ are fixed real numbers, not all zero. After relabeling the
coordinates, we may assume that $L_1 \ne 0$. For each fixed choice of
$x_2, \ldots, x_\rho$, the condition $\ell_v(\mathbf x) \in J$ restricts
$x_1$ to an interval of length $\operatorname{length}(J) / |L_1|$. This
interval contains at most
\[
\frac{\operatorname{length}(J)}{|L_1|} + 1
\ll \operatorname{length}(J) + 1
\]
integers. The remaining $\rho - 1$ coordinates have $O(Y^{\rho - 1})$
possibilities. Hence the number of exponent vectors satisfying the
restriction is
\[
\ll Y^{\rho - 1}\bigl(\operatorname{length}(J) + 1\bigr)\,.
\]
Multiplying by the fixed bound for the torsion fibers gives the result.
\end{proof}

\subsection{The collision bound}

\begin{proposition}[Collision bound]\label{prop:collision-bound}
Under the hypotheses of Theorem~\ref{thm:maximal},
\[
\Colun(A)
\ll
M^2Y^{2\rho - 1}(\log Y)^{\rho + 4}\,.
\]
\end{proposition}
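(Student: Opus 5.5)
The plan is to show that every sum with several representations has at most one representation whose two summands are far apart in $\lambda_v$. The collision excess is then bounded by the number of ``close'' pairs, which the strip bound (Lemma~\ref{lem:strip-bound}) controls.

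\emph{Reductions.} First I reduce to $A^* = A\setminus\{0\}$. By Lemma~\ref{lem:adjoin-zero} and \eqref{eq:low-height-size},
\[
\Colun(A) \le \Colun(A^*) + |A^*| + 1 \ll \Colun(A^*) + MY^\rho,
\]
which is negligible against the claimed bound. Both $\Colun$ and the hypotheses are invariant under scaling $P$ by a nonzero constant. So I fix $m_0 \in P^*$ and replace $P$ by $m_0^{-1}P$. Then every $m \in P^*$ satisfies $h(m) = h(m/m_0) \le R(P) \le C_h\log Y$, and by \eqref{eq:local-height-bound}, $|\lambda_v(m)| \ll \log Y$.

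\emph{Separation threshold.} Put
\[
L := C_{\mathrm{sep}}(\log Y)^{\rho+4} + 10,
\]
and call an unordered pair $\{a,b\}\subset A^*$ \emph{separated} if $|\lambda_v(a)-\lambda_v(b)| > L$.

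\emph{Key claim: each sum $t$ has at most one separated representation.} Suppose $a+b = c+d = t$ with both pairs separated, labelled so that $|b|_v < e^{-L}|a|_v$ and $|d|_v < e^{-L}|c|_v$. Then $|t|_v$ lies within a factor $1\pm e^{-L}$ of $|a|_v$ (with equality at a finite place by the ultrametric inequality), and likewise of $|c|_v$. Hence $|a|_v \asymp |c|_v$ with constants near $1$. Now $a - c = d - b$, so
\[
|a/c - 1|_v \le \bigl(|b|_v+|d|_v\bigr)/|c|_v \le 4e^{-L}.
\]
Writing $a = mg$ and $c = m'g'$, Lemma~\ref{lem:bounded-separation} forces $a = c$, because otherwise $|a/c-1|_v \ge \exp(-C_{\mathrm{sep}}(\log Y)^{\rho+4}) > 4e^{-L}$. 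Then $b = d$, so the two representations coincide.

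\emph{Counting close pairs.} By the claim, for each $t$ we have $\max\{u_{A^*}(t)-1,0\}$ at most the number of non-separated representations of $t$. Summing over $t$,
\[
\Colun(A^*) \le \#\{\text{non-separated unordered pairs}\}.
\]
I overcount by ordered quadruples $(m,g,m',g')\in (P^*\times G)^2$ with
\[
|\lambda_v(mg) - \lambda_v(m'g')| \le L.
\]
Fix $m, m', g$. Since $\lambda_v$ is additive and $|\lambda_v(m)|, |\lambda_v(m')| \ll \log Y$, the value $\lambda_v(g')$ is confined to an interval of length $2L + O(\log Y) \ll (\log Y)^{\rho+4}$. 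Lemma~\ref{lem:strip-bound} then gives $O(Y^{\rho-1}(\log Y)^{\rho+4})$ choices of $g'$. Multiplying by the $M^2|G| \ll M^2Y^\rho$ choices of $(m,m',g)$ gives the stated bound $M^2Y^{2\rho-1}(\log Y)^{\rho+4}$.

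\emph{Main obstacle.} The delicate step is the key claim at an archimedean $v$. There one must check that the approximations $|t|_v \approx |a|_v \approx |c|_v$ really yield $|a/c-1|_v \ll e^{-L}$ with an absolute constant, so that the choice of $L$ beats the lower bound from Lemma~\ref{lem:bounded-separation}. The additive constant $10$ in $L$ is what absorbs these factors. The rescaling of $P$ is also needed at this point, so that the $\lambda_v(m)$ shifts cost only $O(\log Y)$ in the strip length.
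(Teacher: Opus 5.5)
Your proposal is correct and follows essentially the same route as the paper: threshold the representations by a gap of size $C_{\mathrm{sep}}(\log Y)^{\rho+4}+O(1)$ in $\lambda_v$, use Lemma~\ref{lem:bounded-separation} to show each sum has at most one far-apart representation, and count the close pairs with Lemma~\ref{lem:strip-bound}. The differences are cosmetic: the paper takes $|x_1|_v$ to be the largest of the four absolute values rather than comparing through $|t|_v$. Also, your rescaling of $P$ is unnecessary, because once $m,m',g$ are fixed, $\lambda_v(m')$ is only a fixed shift and the admissible values of $\lambda_v(g')$ already lie in an interval of length exactly $2L$.
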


\begin{proof}
For each $x \in A^*$, fix once and for all one factorization
\[
x = m(x)g(x)\,,
\qquad
m(x) \in P^*\,,
\qquad
g(x) \in G\,.
\]

Let $C_{\mathrm{sep}}$ be the constant from
Lemma~\ref{lem:bounded-separation}, and define
\[
H_Y := C_{\mathrm{sep}}(\log Y)^{\rho + 4} + \log 4\,.
\]
We call an unordered representation $\{x, y\}$ of the sum
\[
x + y
\qquad(x, y \in A^*)
\]
bad if
\[
|\lambda_v(x) - \lambda_v(y)| \le H_Y\,,
\]
and good otherwise.

We first count bad representations. Fix choices $m, m' \in P^*$ and
$g \in G$. If $g' \in G$ satisfies
\[
|\lambda_v(mg) - \lambda_v(m'g')| \le H_Y\,,
\]
then $\lambda_v(g')$ belongs to an interval of length $2H_Y$. By
Lemma~\ref{lem:strip-bound}, there are
\[
\ll Y^{\rho - 1}(H_Y + 1)
\]
possible values of $g'$. Therefore the number of quadruples
$(m, g, m', g') \in (P^* \times G)^2$ satisfying the badness condition is at
most
\[
\ll M^2|G|Y^{\rho - 1}(H_Y + 1)
\ll M^2Y^{2\rho - 1}(\log Y)^{\rho + 4}\,.
\]
Since every bad unordered pair in $A^*$ is counted by at least one such
choice of representatives, the total number of bad unordered
representations is bounded by the same quantity.

We now show that no two distinct good representations have the same sum.
Suppose that $\{x_1, x_2\}$ and $\{x_3, x_4\}$ are good unordered
representations satisfying
\[
x_1 + x_2 = x_3 + x_4\,.
\]
By interchanging the two representations and reordering their summands, we
may assume that $|x_1|_v$ is largest among the four absolute values and
that $|x_3|_v \ge |x_4|_v$. Since the representations are good,
\[
\lambda_v(x_1) - \lambda_v(x_2) > H_Y\,,
\qquad
\lambda_v(x_3) - \lambda_v(x_4) > H_Y\,.
\]
Exponentiating gives
\[
|x_2|_v < e^{-H_Y}|x_1|_v\,,
\qquad
|x_4|_v < e^{-H_Y}|x_3|_v \le e^{-H_Y}|x_1|_v\,.
\]
From $x_3 - x_1 = x_2 - x_4$ and the triangle inequality, we obtain
\begin{align*}
\left|\frac{x_3}{x_1} - 1\right|_v
&=
\left|\frac{x_2 - x_4}{x_1}\right|_v\\
&\le \frac{|x_2|_v + |x_4|_v}{|x_1|_v}\\
&< 2e^{-H_Y}\\
&= \frac{1}{2}\exp\left(-C_{\mathrm{sep}}(\log Y)^{\rho + 4}\right)\,.
\end{align*}
If $x_3 \ne x_1$, write the chosen factorizations as
\[
x_1 = m_1g_1\,,
\qquad
x_3 = m_3g_3\,.
\]
Lemma~\ref{lem:bounded-separation} gives
\[
\left|\frac{x_3}{x_1} - 1\right|_v
\ge
\exp\left(-C_{\mathrm{sep}}(\log Y)^{\rho + 4}\right)\,,
\]
a contradiction. Hence $x_1 = x_3$. Subtracting equal summands from the
original equality gives $x_2 = x_4$. Thus the two unordered representations
are identical.

For each $t \in F$, let $u_{\text{bad}}(t)$ and $u_{\text{good}}(t)$ count
the bad and good unordered representations of $t$ by elements of $A^*$,
respectively. Then
\[
u_{A^*}(t) = u_{\text{bad}}(t) + u_{\text{good}}(t)\,,
\]
and the preceding paragraph gives $u_{\text{good}}(t) \le 1$ for every $t$.
Therefore
\[
\max\{u_{A^*}(t) - 1,\, 0\} \le u_{\text{bad}}(t)\,.
\]
Summing over $t$, we get
\[
\Colun(A^*)
\le \sum_t u_{\text{bad}}(t)
\ll M^2Y^{2\rho - 1}(\log Y)^{\rho + 4}\,.
\]

If $0 \notin A$, then $A = A^*$ and we are done. Otherwise,
Lemma~\ref{lem:adjoin-zero} gives
\[
\Colun(A) \le \Colun(A^*) + |A^*| + 1\,.
\]
Since $M \ge 1$, $\rho \ge 1$, and $Y \ge 2$, we have
\[
|A^*| + 1 \ll MY^\rho \le M^2Y^{2\rho - 1}\,.
\]
The additional term is absorbed into the displayed collision bound.
\end{proof}

\subsection{Proof of the asymptotically maximal-sumset theorem}

\begin{proof}[Proof of Theorem~\ref{thm:maximal}]
By Lemma~\ref{lem:collision-identity},
\[
\frac{|A + A|}{|A|^2}
=
\frac{1}{2} + \frac{1}{2|A|} - \frac{\Colun(A)}{|A|^2}\,.
\]
By \eqref{eq:A-at-least-G},
\[
\frac{1}{|A|} \ll Y^{-\rho} \ll Y^{-1}\,.
\]
On the other hand, \eqref{eq:low-height-size} and
Proposition~\ref{prop:collision-bound} give
\[
\frac{\Colun(A)}{|A|^2}
\ll
\frac{M^2Y^{2\rho - 1}(\log Y)^{\rho + 4}}
{M^2Y^{2\rho}(\log Y)^{-2\rho}}
=
\frac{(\log Y)^{3\rho + 4}}{Y}\,.
\]
Taking $C = 3\rho + 4$ proves the theorem. All constants used above depend
only on the data listed in its statement, so the estimate is uniform in
the coefficient set, the exponent set, and the scale.
\end{proof}

\section{Product growth and the fixed-field obstruction}

\subsection{Factorizations of bounded algebraic integers}

We now prove the product-growth estimate needed for
Theorem~\ref{thm:main}. The relevant input is a lower bound for the
product set of the coefficients. We obtain it by bounding the number of
factorizations of a nonzero algebraic integer into two factors with
bounded conjugates.

For a nonzero integral ideal $\mathfrak a \subseteq \mathcal O_F$, let
$\tau_F(\mathfrak a)$ be the number of its integral ideal divisors. Thus
$\mathfrak b \mid \mathfrak a$ means that
$\mathfrak a = \mathfrak b\mathfrak c$ for an integral ideal
$\mathfrak c$. We use unique factorization of nonzero ideals into prime
ideals, as in \cite[Chapter~1]{Narkiewicz2004}.

\begin{lemma}[Ideal-divisor bound]\label{lem:ideal-divisors}
For every $\eta > 0$ and every nonzero integral ideal $\mathfrak a$,
\[
\tau_F(\mathfrak a) \ll_{F, \eta} (N\mathfrak a)^\eta\,.
\]
\end{lemma}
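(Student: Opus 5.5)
We will mimic the classical proof of the divisor bound $\tau(n) \ll_\eta n^\eta$ over $\Z$. Unique factorization of ideals gives multiplicativity: writing $\mathfrak a = \prod_{\mathfrak q}\mathfrak q^{a_{\mathfrak q}}$, the integral divisors of $\mathfrak a$ are exactly $\prod_{\mathfrak q}\mathfrak q^{b_{\mathfrak q}}$ with $0 \le b_{\mathfrak q} \le a_{\mathfrak q}$. Hence
\[
\tau_F(\mathfrak a) = \prod_{\mathfrak q \mid \mathfrak a}(a_{\mathfrak q} + 1)\,,
\qquad
N\mathfrak a = \prod_{\mathfrak q \mid \mathfrak a}(N\mathfrak q)^{a_{\mathfrak q}}\,,
\]
and therefore
\[
\frac{\tau_F(\mathfrak a)}{(N\mathfrak a)^\eta}
= \prod_{\mathfrak q \mid \mathfrak a}\frac{a_{\mathfrak q} + 1}{(N\mathfrak q)^{\eta a_{\mathfrak q}}}\,.
\]

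We then split the prime ideals according to the size of their norm. If $N\mathfrak q \ge 2^{1/\eta}$, then $(N\mathfrak q)^{\eta a} \ge 2^a \ge a + 1$ for every $a \ge 0$, so each such factor is at most $1$. If instead $N\mathfrak q < 2^{1/\eta}$, then since $N\mathfrak q \ge 2$ we have $(N\mathfrak q)^{\eta a} \ge e^{\eta a\log 2}$. The function $a \mapsto (a + 1)e^{-\eta a\log 2}$ is bounded on $a \ge 0$ by a constant $c(\eta)$, for instance $c(\eta) \le \max\{1, 1/(e\eta\log 2)\}\cdot e^{\eta\log 2}$ up to a harmless adjustment.

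It remains to count the primes in the second class. Each prime ideal $\mathfrak q$ lies above a rational prime $p \le N\mathfrak q$, and at most $d$ prime ideals lie above each $p$. Thus the number of $\mathfrak q$ with $N\mathfrak q < 2^{1/\eta}$ is at most $d\cdot 2^{1/\eta}$, which depends only on $F$ and $\eta$. Multiplying the factor bounds gives
\[
\tau_F(\mathfrak a) \le c(\eta)^{d2^{1/\eta}}(N\mathfrak a)^\eta\,,
\]
which proves the lemma. None of the steps is a genuine obstacle. The only points needing care are the multiplicativity of $N$ over prime factorizations, which follows from the Chinese remainder theorem together with $|\mathcal O_F/\mathfrak q^a| = (N\mathfrak q)^a$, and the finiteness of the set of small primes. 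Both are standard and can be cited from \cite[Chapter~1]{Narkiewicz2004}.
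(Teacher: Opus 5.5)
Your proof is correct, but it is organized differently from the paper's. You run the classical divisor-bound argument directly on the prime ideals $\mathfrak q$, splitting them by whether $N\mathfrak q \ge 2^{1/\eta}$. The paper instead reduces to the rational divisor function. It groups the primes $\mathfrak q$ by the rational prime $p$ below them and notes that the exponent of $p$ in $N\mathfrak a$ is $k_p = \sum_{\mathfrak q \mid p} f_{\mathfrak q}a_{\mathfrak q}$. Then $a_{\mathfrak q} \le k_p$, together with the fact that at most $d$ primes lie above $p$, gives $\prod_{\mathfrak q \mid p}(a_{\mathfrak q}+1) \le (k_p+1)^d$, and hence $\tau_F(\mathfrak a) \le \tau(N\mathfrak a)^d$. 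The integer bound $\tau(m) \ll_\theta m^\theta$ with $\theta = \eta/d$ finishes the proof.

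Both arguments rest on the same two inputs: multiplicativity of the norm, and at most $d$ primes over each $p$. The paper's route is modular, since it isolates all the number-field content in the single comparison $\tau_F(\mathfrak a) \le \tau(N\mathfrak a)^d$ and then quotes the standard result over $\Z$. Your route is self-contained and gives a better explicit constant, $c(\eta)^{d2^{1/\eta}}$, because only prime ideals of norm below $2^{1/\eta}$ contribute. Passing through $\tau(N\mathfrak a)^d$ with exponent $\eta/d$ instead involves rational primes up to $2^{d/\eta}$, and the resulting constant is then raised to the $d$th power.
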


\begin{proof}
Write
\[
\mathfrak a = \prod_{\mathfrak q}\mathfrak q^{a_{\mathfrak q}}\,.
\]
Then
\[
\tau_F(\mathfrak a) = \prod_{\mathfrak q}(a_{\mathfrak q} + 1)\,.
\]
For a rational prime $p$, its exponent in $N\mathfrak a$ is
\[
k_p := \sum_{\mathfrak q \mid p}f_{\mathfrak q}a_{\mathfrak q}\,.
\]
There are at most $d$ prime ideals above $p$, and every
$a_{\mathfrak q} \le k_p$. Hence
\[
\prod_{\mathfrak q \mid p}(a_{\mathfrak q} + 1) \le (k_p + 1)^d\,,
\]
so, writing $\tau(m)$ for the ordinary divisor function,
\[
\tau_F(\mathfrak a) \le \tau(N\mathfrak a)^d\,.
\]

For completeness, $\tau(m) \ll_\theta m^\theta$ for every $\theta > 0$.
Indeed, if $p^\theta \ge 2$ and $a \ge 1$, then
\[
a + 1 \le 2^a \le p^{\theta a}\,.
\]
For each of the finitely many primes with $p^\theta < 2$, the quantity
$(a + 1)p^{-\theta a}$ is bounded as $a$ varies over the nonnegative
integers. Multiplying these finitely many bounds and then multiplying
over the prime factors of $m$ proves the assertion. Applying it with
$\theta = \eta / d$ gives the ideal-divisor bound.
\end{proof}

\begin{lemma}[Bounded integral
factorizations]\label{lem:integral-factorizations}
Let $T \ge 2$, and let $Q \subset \mathcal O_F \setminus \{0\}$ be finite,
with
\[
|\sigma(q)| \le T
\qquad(q \in Q, \ \sigma : F \hookrightarrow \mathbb C)\,.
\]
For $z \in F^\times$, define
\[
r_Q(z) := |\{(q_1, q_2) \in Q^2 : q_1q_2 = z\}|\,.
\]
For every $\eta > 0$,
\[
\max_{z \in F^\times}r_Q(z) \ll_{F, \eta} T^\eta\,.
\]
\end{lemma}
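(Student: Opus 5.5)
Fix $z \in F^\times$ with $r_Q(z) \ge 1$, and fix one pair $(q_1, q_2) \in Q^2$ with $q_1q_2 = z$. Then $z \in \mathcal O_F \setminus \{0\}$, and every conjugate satisfies $|\sigma(z)| \le T^2$. Hence
\[
N((z)) = |N_{F/\Q}(z)| \le T^{2d}.
\]
The plan is to bound $r_Q(z)$ by the number of ideal divisors of $(z)$ times the number of admissible generators of each such divisor. The ideal count comes from Lemma~\ref{lem:ideal-divisors}, and the generator count comes from Lemma~\ref{lem:Gamma-height-count} applied to the full unit group.

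First, consider the map $(q_1', q_2') \mapsto (q_1')$ from representations of $z$ to integral ideals. Its image consists of ideal divisors of $(z)$, since $(q_1')(q_2') = (z)$ with both factors integral. Moreover, $q_1'$ determines $q_2' = z/q_1'$. So $r_Q(z)$ is at most
\[
\tau_F((z)) \cdot \max_{\mathfrak b} \bigl|\{q \in Q : (q) = \mathfrak b\}\bigr|.
\]
Lemma~\ref{lem:ideal-divisors} with parameter $\eta/(4d)$ gives $\tau_F((z)) \ll_{F,\eta} (T^{2d})^{\eta/(4d)} = T^{\eta/2}$.

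Second, fix a principal ideal $\mathfrak b$ and some $q_0 \in Q$ with $(q_0) = \mathfrak b$. Any other $q \in Q$ with $(q) = \mathfrak b$ satisfies $q = \varepsilon q_0$ with $\varepsilon \in \mathcal O_F^\times$. By Lemma~\ref{lem:height-identities} and \eqref{eq:integral-height},
\[
h(\varepsilon) = h(q/q_0) \le h(q) + h(q_0) \le 2\log T.
\]
Apply Lemma~\ref{lem:Gamma-height-count} with $\Gamma_0 = \mathcal O_F^\times$, which is finitely generated of rank $r = r_1 + r_2 - 1$ by Dirichlet's unit theorem. The number of such $\varepsilon$ is then $\ll_F (2\log T + 1)^r \ll_{F,\eta} T^{\eta/2}$, because $T \ge 2$ and a power of $\log T$ is dominated by any positive power of $T$. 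Multiplying the two bounds gives $r_Q(z) \ll_{F,\eta} T^\eta$ uniformly in $z$.

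No step is a genuine obstacle. The one point needing care is that the unit-group fiber is controlled only through height, not through the ideal. This works because the conjugate bound forces $h(q) \le \log T$ for every $q \in Q$, so all generators of a fixed principal ideal inside $Q$ differ by units of height $O(\log T)$. Since Lemma~\ref{lem:Gamma-height-count} was stated for an arbitrary fixed finitely generated subgroup, it applies to the unit group directly. The remaining bookkeeping is to split $\eta$ between the divisor count and the unit count.
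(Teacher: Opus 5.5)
Your proposal is correct and follows the paper's proof essentially step for step. You bound $N((z)) \le T^{2d}$ and count ideal divisors via Lemma~\ref{lem:ideal-divisors} with exponent $\eta/(4d)$, then bound the generators of each principal ideal inside $Q$ by the units of height at most $2\log T$, counted with Lemma~\ref{lem:Gamma-height-count} applied to $\mathcal O_F^\times$, exactly as the paper does.
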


\begin{proof}
It suffices to consider $z \in QQ$. Then $z$ is a nonzero algebraic
integer, and every conjugate has modulus at most $T^2$. Thus
\[
N((z)) = |N_{F / \Q}(z)| \le T^{2d}\,.
\]
If $q_1q_2 = z$, then $(q_1)$ is an integral ideal divisor of $(z)$.
Applying Lemma~\ref{lem:ideal-divisors} with exponent $\eta / (4d)$ shows
that there are at most
\[
\ll_{F, \eta} N((z))^{\eta / (4d)} \le T^{\eta / 2}
\]
possible ideals $(q_1)$.

Fix one such ideal $\mathfrak b$ that occurs, and choose
$q_0 \in Q$ with $(q_0) = \mathfrak b$. If $q_1 \in Q$ has the same
principal ideal, then
\[
u := q_1 / q_0 \in \mathcal O_F^\times\,.
\]
By Lemma~\ref{lem:height-identities},
\[
h(u) \le h(q_1) + h(q_0) \le 2\log T\,.
\]
The unit group is fixed and finitely generated. If $r_F$ is its rank
modulo torsion, Lemma~\ref{lem:Gamma-height-count} gives
\[
|\{u \in \mathcal O_F^\times : h(u) \le 2\log T\}|
\ll_F (1 + \log T)^{r_F}\,.
\]
Every unit determines $q_1 = uq_0$ uniquely, and $q_1$ determines
$q_2 = z / q_1$ uniquely. Consequently,
\[
r_Q(z) \ll_{F, \eta} T^{\eta / 2}(1 + \log T)^{r_F}
\ll_{F, \eta} T^\eta\,.\qedhere
\]
\end{proof}

\begin{corollary}[Product growth of admissible
coefficients]\label{cor:coefficient-products}
For every admissible coefficient family and every $\varepsilon > 0$,
\[
|P_n^*P_n^*| \gg_\varepsilon M_n^{2 - \varepsilon}\,.
\]
In particular, $|P_n^*P_n^*| \ge M_n^{2 - o(1)}$ whenever $M_n \to \infty$.
The constants depend only on $F$, $C_P$, $\kappa$, and $\varepsilon$.
\end{corollary}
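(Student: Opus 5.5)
The plan is to rescale to the integral set $Q_n = w_nP_n^*$ and then run a standard multiplicative-energy/Cauchy--Schwarz argument, with the fiber bound supplied by Lemma~\ref{lem:integral-factorizations}.

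\textbf{Rescaling.} Multiplication by $w_n^2$ is a bijection $P_n^*P_n^* \to Q_nQ_n$. Hence $|P_n^*P_n^*| = |Q_nQ_n|$ and $|Q_n| = M_n$. By \eqref{eq:coefficient-hypothesis}, every conjugate of every $q \in Q_n$ has modulus at most $T_n := \max\{2,\, C_P\widehat M_n^\kappa\}$.

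\textbf{Counting.} Count the $M_n^2$ ordered pairs $(q_1, q_2) \in Q_n^2$ according to their product:
\[
M_n^2 = \sum_{z \in Q_nQ_n} r_{Q_n}(z) \le |Q_nQ_n| \max_{z \in F^\times} r_{Q_n}(z)\,.
\]
Applying Lemma~\ref{lem:integral-factorizations} with $T = T_n$ and exponent $\eta := \varepsilon/\kappa$ gives
\[
\max_{z} r_{Q_n}(z) \ll_{F,\varepsilon,\kappa} T_n^{\varepsilon/\kappa} \ll_{C_P,\kappa,\varepsilon} \widehat M_n^{\varepsilon}\,.
\]
Therefore $|Q_nQ_n| \gg_\varepsilon M_n^2 \widehat M_n^{-\varepsilon}$. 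When $M_n \ge 2$ we have $\widehat M_n = M_n$, so this is $M_n^{2-\varepsilon}$. When $M_n = 1$ the claim is trivial, since $|P_n^*P_n^*| = 1 = M_n^{2-\varepsilon}$.

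\textbf{The $o(1)$ statement.} This follows by a diagonal argument: $M_n^{2-\varepsilon} c_\varepsilon \ge M_n^{2-2\varepsilon}$ once $M_n$ is large enough depending on $\varepsilon$.

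There is no real obstacle here. The only point needing care is that the implied constant depends on $F$, $C_P$, $\kappa$ and $\varepsilon$ but not on $n$. This holds because Lemma~\ref{lem:integral-factorizations} is uniform in $Q$ and only requires the conjugate bound $T \ge 2$. We also need $T_n \ge 2$, which we arranged by taking the maximum with $2$.
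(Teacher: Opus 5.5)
Your proposal is correct and follows essentially the same route as the paper: rescale to $Q_n = w_nP_n^*$, apply Lemma~\ref{lem:integral-factorizations} with $T_n = \max\{2,\, C_P\widehat M_n^\kappa\}$ and $\eta = \varepsilon/\kappa$, and divide $M_n^2 = \sum_z r_{Q_n}(z)$ by the maximal fiber size. The only small differences are cosmetic. No Cauchy--Schwarz or energy step is actually needed, since the pointwise fiber bound suffices, and the paper absorbs the case $M_n = 1$ via $\widehat M_n \le 2M_n$ rather than treating it separately.
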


\begin{proof}
Use the normalization $Q_n = w_nP_n^*$ from
Definition~\ref{def:coefficients}, and put
\[
T_n := \max\{2,\, C_P\widehat M_n^\kappa\}
\le 2C_P\widehat M_n^\kappa\,.
\]
Applying Lemma~\ref{lem:integral-factorizations} with
$\eta = \varepsilon / \kappa$ gives
\[
\max_z r_{Q_n}(z) \ll_\varepsilon \widehat M_n^\varepsilon
\ll_\varepsilon M_n^\varepsilon\,.
\]
Since
\[
M_n^2 = \sum_{z \in Q_nQ_n}r_{Q_n}(z)\,,
\]
we obtain
\[
|Q_nQ_n| \gg_\varepsilon M_n^{2 - \varepsilon}\,.
\]
Finally, $Q_nQ_n = w_n^2(P_n^*P_n^*)$, so the two product sets have the
same cardinality. The stated asymptotic form follows by applying the
bound with an arbitrarily small fixed positive exponent.
\end{proof}

\subsection{Product saving and the relative scales}

\begin{lemma}[Scale forcing]\label{lem:scale-forcing}
Let $P$ be a member of an admissible coefficient family, put
$M := |P \setminus \{0\}|$, and let $G \subset F^\times$ be finite and
nonempty. Write $A := PG$ and $L := |G|$. For every fixed
$\delta \in (0, 1]$,
\[
|AA| \le |A|^{2 - \delta}
\quad \Longrightarrow \quad
M^{\delta / 2} \ll_\delta L^{2 - \delta}\,.
\]
If, in addition, $G \subset \Gamma$ satisfies
\eqref{eq:lattice-hypothesis} at scale $Y \ge 2$, then
\begin{equation}\label{eq:forced-scales}
M \ll_\delta Y^{2\rho(2 - \delta) / \delta}\,,
\qquad
R(P) \ll_\delta \log Y\,.
\end{equation}
\end{lemma}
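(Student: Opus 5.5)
The plan is to compare a lower bound for $|AA|$ coming from the coefficient product set with the trivial upper bound $|A| \le (M+1)L$, which is at most $2ML$ since $M \ge 1$.

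For the lower bound, fix $g_0 \in G$. Then $P^*P^*g_0^2 \subseteq AA$, so Corollary~\ref{cor:coefficient-products} with a small fixed $\varepsilon$ gives
\[
|AA| \ge |P^*P^*| \gg_\varepsilon M^{2-\varepsilon}.
\]
Combining this with the hypothesis $|AA| \le |A|^{2-\delta} \le (2ML)^{2-\delta}$ yields
\[
M^{2-\varepsilon} \ll_{\delta,\varepsilon} M^{2-\delta}L^{2-\delta}.
\]
Choosing $\varepsilon = \delta/2$ gives $M^{\delta/2} \ll_\delta L^{2-\delta}$, which is the first claim. The constant depends only on $F$, $C_P$, $\kappa$ and $\delta$. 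The case $M$ bounded is trivial, since $L \ge 1$.

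For the second part, the lattice hypothesis gives $L = |G| \ll Y^\rho$, by the remark following Definition~\ref{def:lattice}. Hence
\[
M^{\delta/2} \ll_\delta Y^{\rho(2-\delta)},
\]
and raising both sides to the power $2/\delta$ gives $M \ll_\delta Y^{2\rho(2-\delta)/\delta}$. For the ratio height, Lemma~\ref{lem:coefficient-height} gives
\[
R(P) \le 2\log C_P + 2\kappa\log\widehat M.
\]
Substituting the bound on $M$ gives $\log\widehat M \le \log 2 + O_\delta(1) + \tfrac{2\rho(2-\delta)}{\delta}\log Y$. Since $Y \ge 2$, each constant is at most a constant multiple of $\log Y$, so $R(P) \ll_\delta \log Y$.

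No step is a real obstacle. The only care needed is to pick $\varepsilon$ strictly smaller than $\delta$ in Corollary~\ref{cor:coefficient-products}, so that a positive power of $M$ survives, and to account for the possible zero element in the bound $|A| \le 2ML$.
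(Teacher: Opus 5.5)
Your proposal is correct and is essentially the paper's proof: the lower bound $|AA| \ge |P^*P^*| \gg_\delta M^{2-\delta/2}$ comes from $g_0^2P^*P^* \subseteq AA$ and Corollary~\ref{cor:coefficient-products} with $\varepsilon = \delta/2$, the upper bound is $|A| \le 2ML$, and the second part follows from $L \ll Y^\rho$ together with Lemma~\ref{lem:coefficient-height}. Your extra remarks, on bounded $M$ and on spelling out $\log\widehat M \ll_\delta \log Y$ using $Y \ge 2$, only elaborate steps that the paper states in one line.
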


\begin{proof}
Choose any $g_0 \in G$. Since $g_0^2P^*P^* \subseteq AA$,
Corollary~\ref{cor:coefficient-products}, with
$\varepsilon = \delta / 2$, gives
\[
|AA| \ge |P^*P^*| \gg_\delta M^{2 - \delta / 2}\,.
\]
On the other hand, $|P| \le M + 1 \le 2M$, so
\[
|A| \le |P||G| \le 2ML\,.
\]
Under the assumed product saving, it follows that
\[
M^{2 - \delta / 2}
\ll_\delta |AA|
\le |A|^{2 - \delta}
\le (2ML)^{2 - \delta}\,.
\]
Dividing by $M^{2 - \delta}$ proves the first assertion.

For a full-dimensional multiplicative lattice set, $L \ll Y^\rho$.
Therefore
\[
M \ll_\delta Y^{2\rho(2 - \delta) / \delta}\,.
\]
Since $Y \ge 2$, this gives
$\log\max\{2,\, M\} \ll_\delta \log Y$.
Lemma~\ref{lem:coefficient-height} now proves the second assertion in
\eqref{eq:forced-scales}.
\end{proof}

\subsection{Proof of the main theorem}

\begin{proof}[Proof of Theorem~\ref{thm:main}]
To prove \eqref{eq:main-maximal}, fix $\delta \in (0, 1]$ and suppose that
\[
|A_nA_n| \le |A_n|^{2 - \delta}
\]
for all sufficiently large $n$. By
Lemma~\ref{lem:scale-forcing},
\[
M_n \ll_\delta Y_n^{2\rho(2 - \delta) / \delta}\,,
\qquad
R(P_n) \ll_\delta \log Y_n\,.
\]
Since $|G_n| \ll Y_n^\rho$, we also have
\[
|A_n| \le (M_n + 1)|G_n|
\ll_\delta Y_n^{2\rho(2 - \delta) / \delta + \rho}\,.
\]
The hypothesis $|A_n| \to \infty$ therefore implies $Y_n \to \infty$.
The hypotheses of Theorem~\ref{thm:maximal} now hold with a fixed
ratio-height constant that may depend on $\delta$. Hence
\[
\frac{|A_n + A_n|}{|A_n|^2}
=
\frac{1}{2} + O_\delta\left(\frac{(\log Y_n)^C}{Y_n}\right)\,,
\]
where $C = 3\rho + 4$ is permissible. This proves
\eqref{eq:main-maximal}.

To prove \eqref{eq:main-growth}, suppose for a contradiction that there
exists a fixed $\varepsilon \in (0, 1)$ and an infinite subsequence along
which
\[
\max\{|A_n + A_n|,\, |A_nA_n|\} < |A_n|^{2 - \varepsilon}\,.
\]
Along this subsequence, the original hypotheses remain valid, including
$|A_n| \to \infty$. In particular,
\[
|A_nA_n| \le |A_n|^{2 - \varepsilon}\,.
\]
Applying the first part of the proof to this subsequence with
$\delta = \varepsilon$ gives $Y_n \to \infty$ and
\[
\frac{|A_n + A_n|}{|A_n|^2} \to \frac{1}{2}\,.
\]
However, the assumed bound on the sumset gives
\[
\frac{|A_n + A_n|}{|A_n|^2}
< |A_n|^{-\varepsilon} \to 0\,,
\]
a contradiction. Therefore, for every fixed $\varepsilon \in (0, 1)$,
the maximum is at least $|A_n|^{2 - \varepsilon}$ for all sufficiently
large $n$, which is precisely \eqref{eq:main-growth}.
\end{proof}

The same maximal-sumset conclusion holds if the product-saving
hypothesis is weakened to
\[
|A_nA_n| \ll |A_n|^{2 - \delta}
\]
for a fixed $\delta \in (0, 1]$. Since $|A_n| \to \infty$, the implied
constant is at most $|A_n|^{\delta / 2}$ for all sufficiently large $n$.
Thus $|A_nA_n| \le |A_n|^{2 - \delta / 2}$ eventually, and
Theorem~\ref{thm:main} applies with $\delta / 2$.

\section{Applications}

We now record some families of interest to which the results proved in Sections
3 and 4 apply.

\subsection{Fixed-field additive and unit boxes}

We verify that the boxes in Corollary~\ref{cor:BSSZ} satisfy the two
admissibility conditions.

\begin{lemma}[Admissibility of the fixed-field
boxes]\label{lem:box-admissibility}
Let $F$ be totally real of degree $d \ge 2$, and let
$\varepsilon \in (0, 1)$ be fixed. For $X \in \Zpos$ and $Y \ge 2$ real, put
\[
P_X := X + B_F^+(\varepsilon X)\,,
\qquad
G_Y := B_F^\times(Y)\,.
\]
Then
\[
|P_X| \asymp_{F, \varepsilon} X^d\,,
\qquad
|G_Y| \asymp_F Y^{d - 1}\,.
\]
The coefficient sets satisfy \eqref{eq:coefficient-hypothesis} with
$w_X = 1$ and $\kappa = 1 / d$. The unit sets satisfy
\eqref{eq:lattice-hypothesis} for the fixed group
$\Gamma = \mathcal O_F^\times$, of rank $\rho = d - 1$.
\end{lemma}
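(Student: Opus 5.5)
The plan is to verify four things in turn: the size of $P_X$, the coefficient hypothesis, the size of $G_Y$, and the lattice hypothesis. Throughout we use the Minkowski embedding $\iota:F\to\mathbb R^d$, $\alpha\mapsto(\sigma_1(\alpha),\dots,\sigma_d(\alpha))$. Since $F$ is totally real, $\iota(\mathcal O_F)$ is a full lattice in $\mathbb R^d$ of covolume $\sqrt{|\Delta_F|}$.

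\emph{Counting $P_X$.} Translation by $X$ is injective, so $|P_X|=|B_F^+(\varepsilon X)|$. This is the number of points of the lattice $\iota(\mathcal O_F)$ in the cube $[-\varepsilon X,\varepsilon X]^d$. For the upper bound, I will fix a fundamental parallelepiped of diameter $D=O_F(1)$. Its translates by the points in the cube are disjoint and lie in the cube enlarged by $D$, which gives $\ll_F(\varepsilon X+D)^d$. For the lower bound, every translate of the parallelepiped meeting the cube shrunk by $D$ has its lattice point inside the cube, which gives $\gg_F(\varepsilon X-D)^d$ when $\varepsilon X\ge 2D$. For small $X$ (finitely many values once $\varepsilon$ is fixed), the set still contains $0$ and so has at least one element. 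Altogether $|P_X|\asymp_{F,\varepsilon}X^d$, including $X=1$.

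\emph{Coefficient hypothesis.} The elements of $P_X$ are $X+\beta$ with $|\sigma_i(\beta)|\le\varepsilon X$. Since $X\in\Zpos$ is fixed by every embedding, each conjugate satisfies $\sigma_i(X+\beta)\in[(1-\varepsilon)X,(1+\varepsilon)X]$. Hence $0\notin P_X$, so $P_X^*=P_X$ and $M=|P_X|$. Take $w_X=1$; then $Q_X=P_X\subset\mathcal O_F\setminus\{0\}$ and all conjugates are at most $2X$. From the count above, $X\le C\,M^{1/d}$, so $2X\le C_P\widehat M^{1/d}$ for a suitable $C_P=C_P(F,\varepsilon)$. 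This is the coefficient hypothesis with $\kappa=1/d$.

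\emph{Counting $G_Y$ and the lattice hypothesis.} By Dirichlet's theorem, $\Gamma=\mathcal O_F^\times$ has rank $\rho=d-1$, with torsion $\{\pm1\}$. The map $L(u)=(\log|\sigma_i(u)|)_i$ sends $\overline\Gamma$ isomorphically onto a full lattice $\Lambda$ in the hyperplane $H=\{\sum x_i=0\}$. Let $\psi$ be the coordinates with respect to a basis $\mathbf b_1,\dots,\mathbf b_{d-1}$ of $\Lambda$; this is a fixed linear isomorphism $H\to\mathbb R^{d-1}$ with bounded inverse. Then $u\in G_Y$ means $\|L(u)\|_\infty\le Y$, which gives $\|\psi(\pi(u))\|_\infty\le C_GY$. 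This is the inclusion in \eqref{eq:lattice-hypothesis}.

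\emph{Main obstacle: the lower bound $|G_Y|\ge c_GY^{d-1}$.} This is the only real lattice-point count, carried out in the hyperplane. The set $\{x\in H:\|x\|_\infty\le Y\}$ is $Y$ times a fixed convex body $K\subset H$ with nonempty interior. I will use the same fundamental-domain argument inside $H$. If a translate of a fundamental parallelepiped of $\Lambda$ has diameter $D'$ and meets $(Y-D')K'$, where $K'$ is the cube slice, then its lattice point lies in $YK$. This gives $\#(\Lambda\cap YK)\gg_F Y^{d-1}$ for $Y\ge Y_0(F)$. For $2\le Y<Y_0$, we have $G_Y\supseteq\{\pm1\}$, so a small $c_G$ still works. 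Each lattice point has exactly two preimages $\pm u$, so $|G_Y|\asymp_F Y^{d-1}$. The upper bound also follows from the torsion remark after Definition~\ref{def:lattice}.
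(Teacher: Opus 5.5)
Your proposal is correct and has the same structure as the paper's proof: a lattice-point count in the Minkowski embedding for $P_X$, and the conjugate bounds $(1-\varepsilon)X \le \sigma_i(m) \le (1+\varepsilon)X$ giving $0 \notin P_X$ and $\kappa = 1/d$. For $G_Y$ you likewise use Dirichlet's unit theorem with torsion $\{\pm 1\}$.

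The only difference is in how the lattice points are counted. You tile with a fundamental parallelepiped, which forces separate treatment of small $X$ and $Y$. The paper instead pulls back through the integral-basis matrix $M_+$ and the logarithmic unit matrix $M_\times$, then sandwiches the resulting bodies between two origin-centered cubes, which handles all $X \ge 1$ and $Y \ge 2$ at once.
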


\begin{proof}
Choose an integral basis $\omega_1, \ldots, \omega_d$ of $\mathcal O_F$,
and let
\[
M_+ := (\sigma_i(\omega_j))_{1 \le i, j \le d}\,.
\]
The Minkowski embedding identifies $\mathcal O_F$ with the full lattice
$M_+\Z^d \subset \mathbb R^d$. In particular, $M_+$ is invertible. Put
\[
\Omega_+ := \{\mathbf x \in \mathbb R^d : \|M_+\mathbf x\|_\infty \le 1\}\,.
\]
Then
\[
B_F^+(\varepsilon X)
=
\left\{\sum_{j = 1}^d z_j\omega_j :
\mathbf z \in \Z^d \cap \varepsilon X\Omega_+\right\}\,.
\]
Since $M_+$ and its inverse are fixed linear maps, there are constants
$0 < c < C$ such that
\[
[-c, c]^d \subseteq \varepsilon\Omega_+ \subseteq [-C, C]^d\,.
\]
Counting integer points in the dilated bounding cubes gives
\[
|P_X| = |B_F^+(\varepsilon X)| \asymp_{F, \varepsilon} X^d
\qquad(X \ge 1)\,.
\]

Since $X$ is a positive integer, each $m \in P_X$ is an algebraic
integer and satisfies
\[
(1 - \varepsilon)X \le \sigma_i(m) \le (1 + \varepsilon)X
\qquad(1 \le i \le d)\,.
\]
Thus $0 \notin P_X$, and every conjugate is at most
$(1 + \varepsilon)X \ll_{F, \varepsilon} |P_X|^{1 / d}$. This proves the
coefficient
condition.

For the multiplicative factor, the $S$-unit theorem with $S$ consisting
only of the archimedean places is Dirichlet's unit theorem. It identifies
$\mathcal O_F^\times / \{\pm1\}$ with a rank-$(d - 1)$ lattice in
\[
\mathcal H := \{\mathbf x \in \mathbb R^d : x_1 + \cdots + x_d = 0\}\,.
\]
Choose fundamental units $\varepsilon_1, \ldots, \varepsilon_{d - 1}$
whose images under $\psi \circ \pi$ are the standard basis vectors of
$\Z^{d - 1}$, and put
\[
M_\times
:= \bigl(\log|\sigma_i(\varepsilon_j)|\bigr)_
{1 \le i \le d, \ 1 \le j \le d - 1}\,.
\]
This matrix has full column rank. Every unit has a unique expression
\[
u = \xi\prod_{j = 1}^{d - 1}\varepsilon_j^{z_j}\,,
\qquad
\xi \in \{\pm1\}\,,
\qquad
\mathbf z \in \Z^{d - 1}\,,
\]
and its logarithmic vector is $M_\times \mathbf z$. Therefore, with
\[
\Omega_\times
:= \{\mathbf x \in \mathbb R^{d - 1} :
\|M_\times \mathbf x\|_\infty \le 1\}\,,
\]
we have the exact description
\[
G_Y
=
\left\{\xi\prod_{j = 1}^{d - 1}\varepsilon_j^{z_j} :
\xi \in \{\pm1\}\,, \
\mathbf z \in \Z^{d - 1} \cap Y\Omega_\times \right\}\,.
\]
The linear map $M_\times$ is injective, so $\Omega_\times$ is bounded
and contains a neighborhood of zero. Comparison with two fixed cubes
now gives
\[
|G_Y| = 2|\Z^{d - 1} \cap Y\Omega_\times | \asymp_F Y^{d - 1}\,,
\]
and all exponent coordinates have absolute value $O_{F, \psi}(Y)$. These are
exactly the conditions in \eqref{eq:lattice-hypothesis} for
$\rho = d - 1$.
\end{proof}

\begin{proof}[Proof of Corollary~\ref{cor:BSSZ}]
Apply Lemma~\ref{lem:box-admissibility} to $P_{X_n}$ and $G_{Y_n}$.
All admissibility constants depend only on the fixed field, the chosen map
$\psi$, and
$\varepsilon$, not on $X_n$ or $Y_n$. Theorem~\ref{thm:main} therefore
applies to every sequence in the statement.
\end{proof}

The same proof permits families $\widetilde P_X \subseteq P_X$ satisfying
$|\widetilde P_X| \gg X^\beta$ for a fixed $\beta > 0$, and families
$\widetilde G_Y \subseteq B_F^\times(CY)$ satisfying
$|\widetilde G_Y| \gg Y^{d - 1}$ for a fixed $C > 0$. The former satisfy the
coefficient condition with $\kappa = 1 / \beta$, and the latter retain the
required exponent bound and cardinality.

Returning to the full boxes, without a product saving the sumset need
not be maximal.
For example, if $Y$ is fixed and $X \to \infty$, then $G_Y$ is fixed and
\[
A_{X, Y} + A_{X, Y}
\subseteq B_F^+\bigl(2(1 + \varepsilon)e^Y X\bigr)\,.
\]
The same lattice count gives
$|A_{X, Y} + A_{X, Y}| \ll_{F, \varepsilon, Y} X^d \asymp_{F, \varepsilon, Y}
|A_{X, Y}|$.
In this regime, however, Corollary~\ref{cor:coefficient-products} gives
$|A_{X, Y}A_{X, Y}| \ge |A_{X, Y}|^{2 - o(1)}$. A fixed product saving forces
the other regime, where the unit scale tends to infinity and the sumset
is asymptotically maximal.

\subsection{Arithmetic--geometric progression products}

We finish with a family formed from the products of arithmetic and geometric
progressions of common length $n$. Let
$K \in \Zpos$ be fixed, and write $[n] := \{1, 2, \ldots, n\}$. For each
$k \in [K]$, let $a_k \in F$ and $d_k, r_k \in F^\times$ be fixed. Define
\[
P_n^k := \{a_k + (i - 1)d_k : i \in [n]\}\,,
\qquad
G_n^k := \{r_k^j : j \in [n]\}\,,
\]
then
\[
P_n := P_n^1 + \cdots + P_n^K\,,
\qquad
G_n := G_n^1\cdots G_n^K\,,
\]
and finally $A_n := P_nG_n$.

\begin{corollary}[Arithmetic--geometric progression
products]\label{cor:progressions}
If at least one $r_k$ is not a root of unity, then there exists a constant
$C > 0$, depending only on the fixed field and progression parameters,
such that
\[
\frac{|A_n + A_n|}{|A_n|^2}
=
\frac{1}{2} + O\left(\frac{(\log n)^C}{n}\right)
\qquad(n \to \infty)\,.
\]
\end{corollary}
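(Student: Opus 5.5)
The plan is to deduce the corollary directly from Theorem~\ref{thm:maximal} with scale $Y := n$ and the fixed group
\[
\Gamma := \langle r_1, \ldots, r_K\rangle \le F^\times .
\]
This group is finitely generated. Since some $r_k$ is not a root of unity, the image of that $r_k$ in $\overline{\Gamma}$ has infinite order, so $\rho = \operatorname{rank}_{\Z}\overline{\Gamma} \ge 1$. Fix $\psi$ as in Section~2. All of $\Gamma$, $\rho$ and $\psi$ depend only on the progression parameters, so the constants produced by Theorem~\ref{thm:maximal} depend only on the fixed data. It then remains to check its three hypotheses for $n \ge 2$.

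\emph{Coefficient hypothesis.} Every element of $P_n$ has the form
\[
a + \sum_k (i_k - 1)d_k, \qquad a := \sum_k a_k, \qquad i_k \in [n].
\]
Choose a fixed nonzero $w \in \Z$ such that $wa_k$ and $wd_k$ lie in $\mathcal O_F$ for every $k$. Then $wP_n^* \subset \mathcal O_F \setminus \{0\}$, and every conjugate of every element of $wP_n^*$ has modulus $\ll n$. By \eqref{eq:integral-height}, each such element has height at most $\log(Cn)$. The argument of Lemma~\ref{lem:coefficient-height} then gives $R(P_n) \le 2\log(Cn) \le C_h\log n$ for a fixed $C_h$. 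Moreover $P_n \supseteq P_n^1 + (\text{fixed element})$ has at least $n \ge 2$ elements, since $d_1 \ne 0$. Hence $P_n^* \ne \varnothing$.

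\emph{Lattice hypothesis.} Let $\phi : \Z^K \to \Z^\rho$ be the homomorphism sending $e_k$ to $\psi(\pi(r_k))$. For $\mathbf j \in [n]^K$ we have
\[
\psi\Bigl(\pi\Bigl(\prod_k r_k^{j_k}\Bigr)\Bigr) = \phi(\mathbf j),
\]
so $\psi(\pi(G_n)) = \phi([n]^K)$. This set lies in $[-C_Gn, C_Gn]^\rho$ with $C_G := K\max_k \|\phi(e_k)\|_\infty$.

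For the lower bound $|G_n| \ge c_G n^\rho$, which I expect to be the only step needing care, I would argue as follows. Since the vectors $\phi(e_k)$ generate $\overline{\Gamma} \cong \Z^\rho$, they span $\mathbb R^\rho$. Choose indices $k_1, \ldots, k_\rho$ for which $\phi(e_{k_1}), \ldots, \phi(e_{k_\rho})$ are linearly independent. Fix $j_k = 1$ for every other index $k$. Then the map
\[
(j_{k_1}, \ldots, j_{k_\rho}) \longmapsto \phi(\mathbf j)
\]
is injective on $[n]^\rho$. This produces $n^\rho$ distinct exponent vectors in $\psi(\pi(G_n))$, and hence $|G_n| \ge n^\rho$, so we may take $c_G = 1$.

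With $Y = n$, Theorem~\ref{thm:maximal} applies uniformly in $n \ge 2$. It yields
\[
\frac{|A_n + A_n|}{|A_n|^2} = \frac{1}{2} + O\left(\frac{(\log n)^{C}}{n}\right),
\]
with $C = 3\rho + 4$, which is the claimed estimate.
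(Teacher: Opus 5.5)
Your proposal is correct and follows the paper's proof essentially step for step. You use the same choice $\Gamma = \langle r_1, \ldots, r_K\rangle$ and scale $Y = n$, the same lower bound $|G_n| \ge n^\rho$ from $\rho$ linearly independent exponent vectors with the remaining indices held fixed, and the same denominator-clearing height bound $R(P_n) \ll \log n$ before invoking Theorem~\ref{thm:maximal}; you are also slightly more explicit than the paper about the constants $C_G$ and $c_G = 1$.
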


\begin{proof}
For this application, take $\Gamma := \langle r_1, \ldots, r_K\rangle$.
The hypothesis implies that its rank $\rho$ modulo torsion is positive.
Under an identification $\psi : \overline{\Gamma} \to \Z^\rho$, put
\[
\mathbf v_k := \psi(\pi(r_k))\,.
\]
The exponent image of $G_n$ is
\[
\left\{\sum_{k = 1}^K j_k\mathbf v_k : j_k \in [n]\right\}\,,
\]
which is contained in a box of side lengths $O(n)$. The vectors
$\mathbf v_k$ span a rank-$\rho$ lattice. Choose $\rho$ of them that are
linearly independent over $\Q$, vary their indices over $[n]$, and hold
the remaining indices fixed. This gives at least $n^\rho$ distinct
exponent vectors. Hence $G_n$ satisfies the multiplicative lattice
conditions at scale $Y = n$.

Choose a fixed positive rational integer $W$ such that
\[
Wa_k, Wd_k \in \mathcal O_F
\qquad(1 \le k \le K)\,.
\]
For $m \in P_n$, the element $Wm$ is an algebraic integer, and every
archimedean conjugate of
\[
Wm = W\sum_{k = 1}^K a_k + W\sum_{k = 1}^K d_kt_k\,,
\qquad
t_k \in \{0, 1, \ldots, n - 1\}\quad(1 \le k \le K)\,,
\]
has absolute value $O(n)$. Lemma~\ref{lem:height-identities} gives
$h(Wm) \ll \log n$ for $m \ne 0$ and $n \ge 2$. Thus, for
$m, m' \in P_n^*$,
\[
h(m / m') = h\left(\frac{Wm}{Wm'}\right)
\le h(Wm) + h(Wm') \ll \log n\,.
\]
Moreover, $P_n$ contains at least $n$ distinct elements by varying a
single arithmetic index, so $P_n^* \ne \varnothing$ for $n \ge 2$.
Theorem~\ref{thm:maximal}, with $Y = n$, now proves the result.
\end{proof}

\section{Statements and disclosures}
\label{sec:disclosures}
\subsection{Use of generative AI}

Generative AI is a substantial part of the authors' research process. The
authors
made use of OpenAI's GPT-5.6 Sol, GPT-6 Astra, and GPT-6.1 Sol, and Anthropic's
Claude Fable 5, Fable 5.1, and Opus 5.5 in the development of this paper.

These models were used to assist with literature search and review, explain and
analyze papers, assist
with drafting and editing, discuss proof ideas, suggest lemmas and consequences,
outline proofs, and develop computational tests of results. They were also used
substantively during the development of proofs, in some cases proposing key
arguments or generating candidate proofs in response to mathematical problems
posed by the authors.

\subsection{Statement of responsibility}

The authors take full responsibility for the contents of the paper,
including its mathematical claims, arguments, and citations.

\section*{Acknowledgements}
We thank Professor Ilya Shkredov of Purdue University for his continued support
throughout the development of this work, including posing the original problem
we sought to solve, making insightful comments on article drafts, providing
useful discussions, and inviting us to present our work at an informal seminar
at Purdue in October, 2026. We also thank the attendees of the talk for giving
us feedback and ideas related to our work.

\printbibliography

\end{document}